\newtheorem{theorem}{Theorem}
\newtheorem*{lemma}{Lemma}
\theoremstyle{definition}
\def\be{\begin{equation}}        
\def\ee{\end{equation}}
\numberwithin{equation}{section}
\title[The Dirac Delta Function]
      {Integral and Series Representations \\of the Dirac Delta Function}
\author[Y. T. Li and R. Wong]{}
\subjclass{Primary: 46F99; Secondary: 33C10, 33C45}
\keywords{Dirac delta function, Liouville-Green (WKB) approximation, Airy function, Coulomb
wave function, Laguerre polynomials, spherical harmonics.}
 \email{50008430@student.cityu.edu.hk}
 \email{mawong@cityu.edu.hk}
\begin{document}
\maketitle

\centerline{\scshape Y. T. Li }
\medskip
{\footnotesize
 \centerline{ Department of Mathematics, City University of
Hong Kong,}
   \centerline{Tat Chee Avenue, Kowloon, Hong Kong.}
} 

\medskip

\centerline{\scshape R. Wong}
\medskip
{\footnotesize
 \centerline{ Liu Bie Ju Centre for Mathematical Sciences, City University of
Hong Kong,  }
   \centerline{Tat Chee Avenue,  Kowloon, Hong Kong.}
} %

\bigskip

\begin{abstract}
Mathematical justifications are given for several integral and
series representations of the Dirac delta function which appear in
the physics literature. These include integrals of products of
Airy functions, and of Coulomb wave functions; they also include
series of products of Laguerre polynomials and of spherical
harmonics. The methods used are essentially based on the
asymptotic behavior of these special functions.
\end{abstract}

\section{Introduction}

 The Dirac delta function $\delta(x)$ has been used in physics well before the theory of distributions (generalized functions) was introduced by mathematicians. The manner in which physicists  used this function was to define it by the equations
\begin{equation}
\delta(x-a)=0, \qquad\qquad x\neq a,
\end{equation}
and
\begin{equation}
\int^\infty_{-\infty}\phi(x)\delta(x-a)dx=\phi(a), \qquad\qquad
a\in \mathbb{R},
\end{equation}
for any continuous function $\phi(x)$ on  $\mathbb{R}$. However,
mathematically, these two equations are inconsistent in the
classical sense of a function and an integral, since the value of
the integral of a function which  is zero everywhere except for a
finite number of points should be zero. There are now two
mathematically meaningful approaches to help us interpret the
delta function given in $(1.1)-(1.2)$. One approach is to consider
$\delta_a:=\delta(x-a)$ as a continuous linear functional acting
on a space of smooth functions with rapid decay at $\pm \infty$,
and the action of $\delta_a$ on a particular function $\phi(x)$ is
given the value $\phi(a)$; see \cite[p.141]{Rudin} and
\cite[p.77]{Schwartz}. The other approach is to find a sequence of
functions $\delta_n(x-a)$ such that
\begin{equation}
\lim_{n\to\infty}\int^\infty_{-\infty}\delta_n(x-a)\phi(x)dx=\phi(a),
\qquad\qquad  a\in\mathbb{R};
\end{equation}
see \cite[p.55]{Jones} and \cite[p.17]{Lighthill}. Such a sequence
is called a {\it delta sequence} and we write, symbolically,
 \begin{equation}
\lim_{n\to\infty}\delta_n(x-a)=\delta(x-a),   \qquad\qquad
x\in\mathbb{R}.
\end{equation}
It seems that  the second approach  is more acceptable to
physicists and applied mathematicians.

Recently, in the process of preparing some material for the major
project ``NIST Handbook of Mathematical Functions \cite{OLCB}'',
we encountered some very interesting integral and series
representations of the delta function which need mathematical
justification. For instance, in \cite[p.696]{AW} the formula
 \begin{equation}
\int^\infty_0 xtJ_\nu(xt)J_\nu(at)dt=\delta(x-a),   \qquad\qquad
\textup{Re}\; \nu>-1,\; \;x>0,\;\; a>0,
\end{equation}
appears, where $J_\nu(x)$ is the Bessel function of the first
kind, and in \cite[Eq.(122)]{Seaton} one finds the integral
representation
 \begin{equation}
\int^\infty_0 s(x, l; t)s(a, l; t)dt=\delta(x-a),  \qquad\qquad
a>0,\;\; x>0,
\end{equation}
where $s(x, l; t)$ is the Coulomb wave function. A recent
reference  \cite[p.57]{VS} on the Airy function $\textup{Ai}(x)$
also gives the formula
\begin{equation}
\int^\infty_{-\infty}
\textup{Ai}(t-x)\textup{Ai}(t-a)dt=\delta(x-a).
\end{equation}
While physicists may find these representations convenient to use
in applications, mathematicians would, in general, feel uneasy or
even disturbed to see these formulas being used since the
integrals in $(1.5)-(1.7)$ are all divergent. Thus, it would seem
meaningful and necessary to give a mathematical justification for
these representations, and this is exactly the purpose of the
present paper.

There are also some series representations for the delta function.
These include the following:
 \begin{equation}
\sum^\infty_{k=0}\biggl(k+\frac12\biggr)P_k(x)P_k(a)=\delta(x-a),
\end{equation}
 \begin{equation}
\frac{e^{-(x^2+a^2)/2}}{\sqrt{\pi}}\sum^\infty_{k=0}\frac{1}{2^k
k!}H_k(x)H_k(a)=\delta(x-a),
\end{equation}
and
 \begin{equation}
e^{-(x+a)/2}\sum^\infty_{k=0}L_k(x)L_k(a)=\delta(x-a),
\end{equation}
where $P_k(x)$, $H_k(x)$ and $L_k(x)$ are, respectively, the
Legendre, Hermite and Laguerre polynomials. Equations
$(1.8)-(1.10)$ are special cases of an equation  in Morse and
Feshbach  \cite[p.729]{MF}. Another series representation is given
in \cite[p.792]{AW}; that is,
 \begin{equation}
 \begin{split}
\sum^\infty_{k=0}\sum^k_{l=-k}Y_{kl}(\theta_1,
\phi_1)Y^*_{kl}(\theta_2, \phi_2)
&=\frac{1}{\sin\theta_1}\delta(\theta_1-\theta_2)\delta
(\phi_1-\phi_2)   \\
  &=\delta(\cos\theta_1-\cos\theta_2)\delta
(\phi_1-\phi_2),
\end{split}
\end{equation}
where the functions $Y_{kl}(\theta, \phi)$ are the spherical
harmonics (see \cite[p.788]{AW}) and the asterisk ``$\ast$''
denotes complex conjugate.

The orthogonal polynomials in $(1.8)-(1.10)$ and the orthogonal
function in (1.11) are the eigenfunctions corresponding to the
eigenvalues (discrete spectrum) of some differential operators.
Likewise, the special functions in $(1.5)-(1.7)$ can be regarded
as the eigenfunctions associated with the continuous spectrum of
corresponding differential  operators. The proofs of the
representations in $(1.8)-(1.11)$ turn out to be much simpler than
the proofs of those in $(1.5)-(1.7)$. Indeed, we shall show that
the results in $(1.8)-(1.11)$ all follow from expansion theorems
in  orthogonal polynomials, whereas for the representations in
$(1.5)-(1.7)$ we need to provide some new arguments.

\section{A generalized Riemann-Lebesgue Lemma}
There are already several delta sequences in the literature. For
instance, we have
 \begin{eqnarray}
&\delta_n(x-a)=\sqrt{\dfrac{n}{\pi}}e^{-n(x-a)^2},
\\
&\delta_n(x-a)=\dfrac{n}{\pi}\dfrac{1}{1+n^2(x-a)^2}, \end{eqnarray}
and
 \begin{equation}
\delta_n(x-a)=\frac{1}{\pi}\frac{\sin n(x-a)}{x-a};
\end{equation}
see \cite[pp. 35-38]{GS} and \cite[pp. 5-13]{Kanwal}. To verify
whether a given sequence of functions is a delta sequence, one can
apply the criteria given in \cite[p.34]{GS}. If the function
$\phi(x)$ in (1.3) is only piecewise continuous in $\mathbb{R}$,
then this equation becomes
\begin{equation}
\lim_{n\to\infty}\int^\infty_{-\infty}\delta_n(x-a)\phi(x)dx=\frac{1}{2}[\phi(a^+)+\phi(a^-)],
\qquad \qquad a\in \mathbb{R};
\end{equation}
see \cite[p.16]{Kanwal}.

For convenience in our later argument, we also state and prove the
following result.
\begin{lemma}
\textup{(A generalized Riemann-Lebesgue lemma)}. Let $g(x, R)$ be
a continuous function of $x\in(A,  B)$ and uniformly bounded for
$R>0$. If
 \begin{equation}
 \lim_{R\to\infty} \int^{B'}_{A'}g(x, R)dx=0
\end{equation}
 for any $A'$ and $B'$ with $A<A'<B'<B$, then
 \begin{equation}
 \lim_{R\to\infty} \int^{B}_{A}\psi(x)g(x, R)dx=0
\end{equation}
for any integrable function $\psi(x)$ on the finite interval $(A,
B)$. If $A=0$ and $B=+\infty$, or if $A=-\infty$ and  $B=+\infty$,
then \textup{(2.6)} holds for any absolutely integrable function
$\psi(x)$ on the infinite interval $(A, B)$.
\end{lemma}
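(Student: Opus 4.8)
The plan is to mimic the classical proof of the Riemann--Lebesgue lemma: first establish the conclusion for a dense subclass of simple functions, where it follows almost directly from hypothesis (2.5), and then pass to a general $\psi$ by an $L^1$-approximation argument in which the uniform bound on $g$ controls the error. Throughout I would write $M := \sup_{x\in(A,B),\,R>0}|g(x,R)|<\infty$ for the uniform bound guaranteed in the hypothesis.

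First I would treat the finite interval $(A,B)$. For a step function $s(x)=\sum_{j=1}^{N}c_j\chi_{(a_j,b_j)}(x)$ with $A<a_j<b_j<B$, linearity gives
\[
\int_A^B s(x)g(x,R)\,dx=\sum_{j=1}^{N}c_j\int_{a_j}^{b_j}g(x,R)\,dx,
\]
and each summand tends to $0$ as $R\to\infty$ by (2.5); hence (2.6) holds for every such step function. Given an arbitrary integrable $\psi$ on $(A,B)$ and $\ep>0$, I would invoke the density of step functions in $L^1(A,B)$ to choose a step function $s$ with $\int_A^B|\psi-s|\,dx<\ep$. Then
\[
\left|\int_A^B\psi g\,dx\right|\le M\int_A^B|\psi-s|\,dx+\left|\int_A^B s\,g\,dx\right|<M\ep+\left|\int_A^B s\,g\,dx\right|,
\]
and letting $R\to\infty$ yields $\limsup_{R\to\infty}\bigl|\int_A^B\psi g\,dx\bigr|\le M\ep$. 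Since $\ep$ is arbitrary, (2.6) follows.

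For the infinite interval I would reduce to the finite case by cutting off the tails, which is precisely where absolute integrability of $\psi$ enters. Given $\ep>0$, absolute integrability lets me pick finite $A',B'$ with $A<A'<B'<B$ such that $\int_A^{A'}|\psi|\,dx+\int_{B'}^{B}|\psi|\,dx<\ep$. Splitting $\int_A^B\psi g\,dx$ into the integral over $(A',B')$ plus the two tail integrals, the tails are bounded by $M\ep$ via the uniform bound, while the integral over the finite interval $(A',B')$ tends to $0$ by the finite-interval case already established. Hence $\limsup_{R\to\infty}\bigl|\int_A^B\psi g\,dx\bigr|\le M\ep$, and arbitrariness of $\ep$ finishes the argument.

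The individual steps are routine; the point to get right is the structure of the double approximation. The one place that genuinely requires the strengthened hypothesis is the infinite-interval tail estimate: without absolute integrability (say, only conditional convergence of $\int\psi\,dx$) the bound $M\int_{\mathrm{tail}}|\psi|\,dx$ is no longer available and the cutoff argument breaks down, which explains why the statement replaces ``integrable'' by ``absolutely integrable'' on the unbounded intervals.
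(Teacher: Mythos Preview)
Your proof is correct and follows essentially the same approach as the paper: first verify (2.6) for step functions directly from (2.5), then pass to a general integrable $\psi$ by $L^1$-approximation using the uniform bound on $g$, and finally handle the infinite interval by truncating the tails with absolute integrability. The only differences are cosmetic (you phrase the approximation via a $\limsup$ inequality rather than choosing an explicit $R_0$), and your closing remark on why absolute integrability is needed is a nice addition.
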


\begin{proof} First, from (2.5) it is easy to see that (2.6) holds
for step functions. Now, let $\psi(x)$ be an integrable function
on $(A, B)$. For any $\varepsilon>0$, we can always find a step
function $s(x)$ such that
 \begin{equation}
 \int^{B}_{A}|\psi(x)-s(x)|\,dx<\frac{\varepsilon}{2K},
\end{equation}
where $K=\max\{|g(x, R)|:x\in\mathbb{R}\;\textup{and}\;R>0\}$.
Choose $R_0>0$ so that
 \begin{equation}
\biggl|\int^{B}_{A}s(x)g(x, R)dx\biggr|<\frac{\varepsilon}{2}
\qquad \qquad \textup{for all}\;R\geq R_0.
\end{equation}
Write
 \begin{equation*}
\int^{B}_{A}\psi(x)g(x, R)dx=\int^{B}_{A}[\psi(x)-s(x)]g(x,
R)dx+\int^{B}_{A}s(x)g(x, R)dx.
\end{equation*}

\noindent On account of (2.7) and (2.8), we have
 \begin{equation*}
\biggl|\int^{B}_{A}\phi(x)g(x, R)dx\biggr|<\varepsilon
\end{equation*}
for all $R\geq R_0$. Since $\varepsilon$ is arbitrary, this proves
(2.6) when $A$ and $B$ are finite.

If the interval of integration is infinite, and if $\psi(x)$  is
absolutely integrable there, then we can choose finite numbers $A$
and $B$ such that the integral $\int\psi(x)g(x, R)dx$ outside the
interval  $(A, B)$ is small since $g(x, R)$ is uniformly bounded.
On  the finite interval  $(A, B)$, we can apply the result just
established.
\end{proof}

\section{Bessel function}
The Bessel function $J_\nu(xt)$ is a solution of the differential
equation
 \begin{equation}
\frac{d}{dt}\biggl(t\frac{dy}{dt}\biggr)+\biggl(x^2t-\frac{\nu^2}{t}\biggr)y=0.
\end{equation}
With $x$ replaced by $a$, one obtains a corresponding equation for
$J_\nu(at)$. Multiplying   equation (3.1) by $J_\nu(at)$ and the
corresponding equation for $J_\nu(at)$ by $J_\nu(xt)$, and
subtracting the two resulting equations, leads to
 \begin{equation}
 \begin{split}
(x^2-a^2)tJ_\nu(at) J_\nu(xt)=\frac{d}{dt}\{&at J_\nu(xt)J'_\nu(at)\\
&\hspace{-0.7mm}-xtJ_\nu(at)J'_\nu(xt)\}.
\end{split}
\end{equation}
Put
 \begin{equation}
\delta_R(x, a)=x\int^R_0 J_\nu(xt)J_\nu(at)t\,dt.
\end{equation}
An integration of (3.2) gives
\begin{equation}
\delta_R(x, a)=\frac{x}{x^2-a^2}\biggl[at\,
J_\nu(xt)J'_\nu(at)-xt\, J_\nu(at)J'_\nu(xt)\biggr]^R_0.
\end{equation}
From the ascending power series representation
\begin{equation}
 J_\nu(t)=\biggl(\frac{t}{2}\biggr)^{\hspace{-1mm}\nu}\sum^\infty_{n=0}\frac{(-1)^n}{\Gamma(n+\nu+1)n!}\biggl(\frac{t}{2}\biggr)^{\hspace{-1mm}{2n}},
\end{equation}
it can be shown that the leading terms in the series expansions of
$at\,J_\nu(xt)J'_\nu(at)$ and $xt\, J_\nu(at)J'_\nu(xt)$ cancel
out. Thus, the right-hand side of (3.4) vanishes at the lower
limit when $\nu>-1$, and we obtain
\begin{equation}
\delta_R(x, a)=\frac{x}{x^2-a^2}[aR J_\nu(xR)J'_\nu(aR)-xR
J_\nu(aR)J'_\nu(xR)].
\end{equation}
 \begin{theorem}
For $a>0$, $\nu>-1$ and any piecewise continuously differentiable
function $\phi(x)$ on $(0, \infty)$, we have
\begin{equation}
\lim_{R\to\infty}\int^{\infty}_{0}\phi(x)x \biggl(\int^{R}_{0}
J_\nu(xt)J_\nu(at)t \;dt\biggr)dx=\frac12[\phi(a^+)+\phi(a^-)],
\end{equation}
provided that

\textup{(i)} $\int^{\infty}_{1} x^{-\frac12}|\phi(x)|dx$ converges;\\
\indent \textup{(ii)} $\int^{1}_{0} x^{\frac12}|\phi(x)|dx$ converges when $\nu\geq -\frac12$, or \\
\indent \textup{(ii$'$)} $\int^{1}_{0} x^{\nu+1}|\phi(x)|dx$
converges when $-1<\nu <-\frac12$.
 \end{theorem}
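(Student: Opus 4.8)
The plan is to work from the closed form (3.6) and feed in the large-argument asymptotics
\[
J_\nu(z)=\sqrt{\tfrac{2}{\pi z}}\cos\!\left(z-\tfrac{\nu\pi}{2}-\tfrac{\pi}{4}\right)+O(z^{-3/2}),\qquad J_\nu'(z)=-\sqrt{\tfrac{2}{\pi z}}\sin\!\left(z-\tfrac{\nu\pi}{2}-\tfrac{\pi}{4}\right)+O(z^{-3/2}),
\]
valid as $z\to\infty$. Writing $\omega=\tfrac{\nu\pi}{2}+\tfrac{\pi}{4}$, substituting the leading terms into (3.6), using $x^2-a^2=(x-a)(x+a)$, and collapsing the products $\cos(xR-\omega)\sin(aR-\omega)$ and $\cos(aR-\omega)\sin(xR-\omega)$ by the product-to-sum identities, the leading behaviour of the kernel should reduce to
\[
\delta_R(x,a)\approx\frac{1}{\pi}\sqrt{\frac{x}{a}}\left[\frac{\sin R(x-a)}{x-a}+\frac{\sin\!\bigl((x+a)R-(\nu+\tfrac12)\pi\bigr)}{x+a}\right].
\]
The first term is, apart from the smooth factor $\sqrt{x/a}$, exactly the Dirichlet delta sequence (2.3), while the second carries the frequency $x+a$ and is expected to be annihilated in the limit. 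I note that although (3.6) displays the factor $1/(x^2-a^2)$, the bracket there vanishes at $x=a$, so $\delta_R(x,a)$ is in fact continuous across the diagonal and this Dirichlet form is the correct local description near $x=a$.

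I would then fix $\eta$ with $0<\eta<\min(1,a)$ and split the integral as $\int_0^\eta+\int_\eta^\infty$. On $(0,\eta)$ the asymptotic form is not uniformly valid, since the argument $xR$ is not bounded below, so I would bound $\delta_R$ directly: separating the ranges $xR\le1$ and $xR\ge1$ and using the ascending series (3.5) (or the estimate $J_\nu(z)=O(z^{-1/2})$) for the factors of argument $xR$, together with $J_\nu(aR),J_\nu'(aR)=O(R^{-1/2})$, one obtains the bound $|\delta_R(x,a)|\le C\,x^{1/2}$ for $\nu\ge-\tfrac12$ and $|\delta_R(x,a)|\le C\,x^{\nu+1}$ for $-1<\nu<-\tfrac12$, uniform in $R$; hypotheses (ii) and (ii$'$) then make $\int_0^\eta|\phi(x)\,\delta_R(x,a)|\,dx$ uniformly small as $\eta\to0$. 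On $(\eta,\infty)$ I would use the approximate formula above. For the fast-oscillation term I set $g(x,R)=\sin\!\bigl((x+a)R-(\nu+\tfrac12)\pi\bigr)$ and $\psi(x)=\frac{\sqrt{x}}{\pi\sqrt{a}\,(x+a)}\,\phi(x)$; the amplitude $\psi$ behaves like $x^{-1/2}\phi(x)$ at infinity and is therefore absolutely integrable on $(\eta,\infty)$ by (i), so the generalized Riemann-Lebesgue lemma of Section 2 sends this term to $0$. For the Dirichlet term, writing $\Phi(x)=\sqrt{x/a}\,\phi(x)$, the integrand is $\frac1\pi\,\Phi(x)\,\frac{\sin R(x-a)}{x-a}$; here $\Phi(x)/(x-a)\sim x^{-1/2}\phi(x)$ is absolutely integrable at infinity by (i), and since $\frac1\pi\frac{\sin R(x-a)}{x-a}$ is the delta sequence (2.3) with $a$ interior to $(\eta,\infty)$, equation (2.4) gives the limit $\frac12[\Phi(a^+)+\Phi(a^-)]=\frac12[\phi(a^+)+\phi(a^-)]$.

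The hard part will be controlling, uniformly in $x$ on $(\eta,\infty)$, the error committed in replacing the Bessel functions by the leading terms of their asymptotic expansions. Since $xR\ge\eta R$ there, the $O(z^{-3/2})$ remainders are smaller than the retained terms by a factor $O(R^{-1})$, so after integration against $\phi$ the correction should be bounded by a constant times $R^{-1}$, with hypothesis (i) supplying the decay at infinity that keeps the relevant $x$-integral finite. The delicate point is the behaviour near $x=a$: the factor $1/(x^2-a^2)$ renders the individual correction products singular there, and one must verify that, just as for the leading terms, the numerators combine so as to cancel this denominator and leave a quantity continuous across the diagonal. Once a suitable $R$-independent, $x$-integrable majorant is secured, these corrections vanish in the limit. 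Finally, I note that the generalized Riemann-Lebesgue lemma requires $g(x,R)$ to be uniformly bounded, which is why it is convenient to place the amplitude $\sqrt{x}/(x+a)$ in $\psi$ rather than in $g$; verifying its hypothesis (2.5), namely $\int_{A'}^{B'}g(x,R)\,dx\to0$ over subintervals, is a routine oscillatory estimate.
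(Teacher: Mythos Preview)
Your proposal is correct and follows essentially the same route as the paper: the closed form (3.6), the large-argument asymptotics of $J_\nu$ and $J_\nu'$, control of the small-$x$ piece via the case-split bounds that invoke (ii)/(ii$'$), and reduction on the main range to a Dirichlet kernel handled by Jordan's theorem.

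The differences are organizational rather than conceptual. First, the paper uses a three-piece split $(0,d)\cup(d,c)\cup(c,\infty)$: it disposes of the far tail $(c,\infty)$ by a direct crude bound on $|\delta_R(x,a)|$ (showing it is $\le C\,x^{-1/2}$ uniformly in $R$, so (i) makes the tail small), and only inserts the asymptotic expansion on the \emph{compact} interval $(d,c)$. You instead expand on all of $(\eta,\infty)$; this works, but it is exactly the step you flag as ``the hard part'': you must track the $x$-dependence of the $O(z^{-3/2})$ remainders to get an integrable majorant at infinity. The paper's extra cut at $x=c$ buys the convenience of uniform $O(1/R)$ error on a compact set. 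Second, on the middle piece the paper does not combine the two leading products into your Dirichlet-plus-fast-oscillation form; instead it writes $\sqrt{x/a}=1+(\sqrt{x/a}-1)$ and $\sqrt{a/x}=1+(\sqrt{a/x}-1)$, kills the two correction terms by the ordinary Riemann--Lebesgue lemma (equations (3.20)--(3.21)), and is left directly with $\sin R(x-a)/(x-a)$ times the smooth factor $2x/(x+a)$. Your product-to-sum identity achieves the same end and is arguably tidier; the paper's version avoids having to name and dispatch the separate $\sin((x+a)R-\cdots)$ term.
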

\begin{proof} In view of the asymptotic formulas
\begin{equation}
J_\nu (xt)=\sqrt{\frac{2}{\pi xt}}\biggl[\cos
\biggl(xt-\frac{\nu\pi}{2}-\frac{\pi}{4}\biggr)+\varepsilon_1(x,
t)\biggr]
\end{equation}
and
\begin{equation}
J'_\nu (xt)=-\sqrt{\frac{2}{\pi xt}}\biggl[\sin
\biggl(xt-\frac{\nu\pi}{2}-\frac{\pi}{4}\biggr)+\varepsilon_2(x,
t)\biggr]
\end{equation}
where $\varepsilon_j(x, t)=O(1/t)$ as $t\to\infty$ uniformly for
$x\geq\delta>0$ and $j=1, 2$, there are constants $M_1>0$ and
$M_2>0$ such that
\begin{equation}
|J_\nu(aR)|\leq M_1 R^{-\frac12}, \qquad\qquad  |J'_\nu(aR)|\leq
M_1 R^{-\frac12}
\end{equation}
and
\begin{equation}
|J_\nu(xR)|\leq M_2 x^{-\frac12}R^{-\frac12}, \qquad\qquad
|J'_\nu(xR)|\leq M_2 x^{-\frac12}R^{-\frac12}
\end{equation}
for $x\geq1$ and $R\geq 1$. From (3.6), it follows that
\begin{equation*}
|\delta_R(x, a)|\leq
M_1M_2\frac{x^2}{|x^2-a^2|}\biggl(\frac{a}{x^{3/2}}+\frac{1}{x^{1/2}}\biggr).
\end{equation*}
Hence, for $b>\max\{a, 1\}$, we have
\begin{equation*}
\int^{\infty}_{b}|\delta_R(x, a)\phi(x)|dx\leq M_3
\int^{\infty}_{b}x^{-\frac12}|\phi(x)|dx,
\end{equation*}
where $M_3=M_1M_2(a+1)b^2/(b^2-a^2)$. Since the last integral is
convergent by condition (i), for any $\varepsilon>0$ there exists
a number $c>b$ such that
\begin{equation}
\int^{\infty}_{c}|\delta_R(x, a)\phi(x)|dx<\frac{\varepsilon}{2}.
\end{equation}

Let $0<\rho<\min\{a, 1\}$. To estimate the integral of
$\delta_R(x, a)\phi(x)$ on the interval $(0, \rho)$, we divide our
discussion into two cases: (i) $\nu\geq -\frac12$, and (ii)
$-1<\nu<-\frac12$. In the first case, we have from (3.5) a
positive constant $M'_4$ such that
\begin{subequations}
\begin{equation}
|J_\nu(xR)|\leq M'_4 (xR)^{\nu}\leq M'_4(xR)^{-\frac12}
\end{equation}
and
\begin{equation}
|J'_\nu(xR)|\leq M'_4 (xR)^{\nu-1}\leq
M'_4x^{-\frac32}R^{-\frac32}\leq  M'_4x^{-\frac32}R^{-\frac12}
\end{equation}
\end{subequations}
for $0<xR\leq 1$ and $R\geq 1$. From (3.8) and (3.9), we also have
\begin{equation}
|J_\nu(xR)|\leq  M''_4x^{-\frac12}R^{-\frac12},\qquad \qquad
|J'_\nu(xR)|\leq M''_4x^{-\frac12}R^{-\frac12}
\end{equation}
for $xR\geq 1$. Coupling (3.13) and (3.14), we obtain
\begin{equation*}
|J_\nu(xR)|\leq M_4 x^{-\frac12}R^{-\frac12},\qquad \qquad
|J'_\nu(xR)|\leq M_4x^{-\frac32}R^{-\frac12}
\end{equation*}
for $0<x\leq 1$ and $R\geq 1$, with $M_4=\max\{M'_4, M''_4\}$.
Thus,
\begin{equation*}
|\delta_R(x, a)|\leq M_1M_4(a+1)\frac{x^{\frac12}}{a^2-\rho^2},
\qquad \qquad 0<x\leq 1,
\end{equation*}
and
\begin{equation}
\int^{\rho}_{0}|\delta_R(x, a)\phi(x)|dx\leq M_5\int^\rho_0
x^{\frac12}|\phi(x)|dx,
\end{equation}
where $M_5=M_1M_4(a+1)/(a^2-\rho^2)$.

In the second case, there are constants $M'_6>0$ and $M''_6>0$
such that
\begin{equation*}
\begin{split}
&\hspace*{6mm}|J_\nu(xR)|\leq M'_6 (xR)^{\nu}\leq M'_6x^{\nu}R^{-\frac12}, \\
&|J'_\nu(xR)|\leq M'_6 (xR)^{\nu-1}\leq M'_6x^{\nu-1}R^{-\frac12}
\end{split}
\end{equation*}
for $0<xR \leq 1 $ and $R\geq 1$, and
\begin{equation*}
\begin{split}
&|J_\nu(xR)|\leq M''_6 x^{-\frac12}R^{-\frac12}\leq M''_6x^{\nu}R^{-\frac12}, \\
&|J'_\nu(xR)|\leq M''_6 x^{-\frac12}R^{-\frac12}\leq
M''_6x^{\nu-1}R^{-\frac12}
\end{split}
\end{equation*}
for $xR\geq 1$ and $R\geq 1$. With $M_6=\max\{M'_6, M''_6\}$, it
follows that
\begin{equation*}
|J_\nu(xR)|\leq M_6 x^{\nu}R^{-\frac12}\qquad\textup{and} \qquad
|J'_\nu(xR)|\leq M_6 x^{\nu-1}R^{-\frac12}.
\end{equation*}
Therefore,
\begin{equation*}
|\delta_R(x, a)|\leq M_1M_6\frac{1}{a^2-\rho^2}(a+1)x^{\nu+1}
\end{equation*}
and
\begin{equation}
\int^{\rho}_{0}|\delta_R(x, a)\phi(x)|dx\leq
M_7\int^{\rho}_{0}x^{\nu+1}|\phi(x)|dx,
\end{equation}
where $M_7=M_1M_6(a+1)/(a^2-\rho^2)$. On account of conditions
(ii) and (ii$'$), for any $\varepsilon>0$ there is a constant
$0<d<\rho$ such that
\begin{equation}
\int^{d}_{0}|\delta_R(x, a)\phi(x)|dx<\frac{\varepsilon}{2}.
\end{equation}

For $d<x<c$, a combination of (3.6), (3.8) and (3.9) gives
\begin{equation}
\begin{split}
\delta_R(x, a)=\frac{2}{\pi}\frac{x}{x^2-a^2}\biggl\{ &\sqrt{\frac{x}{a}}\sin \zeta(x, R)\cos\zeta(a, R) \\
&-\sqrt{\frac{a}{x}}\cos \zeta(x, R)\sin\zeta(a, R)+\varepsilon(x,
a; R)\biggr\},
 \end{split}
\end{equation}
where $c$ and $d$ are given in (3.12) and (3.17), respectively,
$\zeta(x, R)=xR-\frac{\nu \pi}{2}-\frac{ \pi}{4}$ and
$\varepsilon(x, a; R)=O(1/R)$ as $R\to \infty$ uniformly for $x\in
(d, c)$. Note that $\varepsilon(x, a; R)$ is continuously
differentiable, $\varepsilon(x, a; R)/(x-a)$ is continuous in $x$
and uniformly bounded in $x$ and $R$, and $\phi(x)$ is piecewise
continuous in $(d, c)$. Hence,
 \begin{equation}
\lim_{R\to\infty}\int^{c}_{d}\frac{\varepsilon(x, a;
R)}{x^2-a^2}x\phi(x)dx=0.
\end{equation}
By the Riemann-Lebesgue lemma, we also have
 \begin{equation}
\lim_{R\to\infty}\int^{c}_{d} \biggl( \sqrt{\frac{x}{a}}-1\biggr)
\frac{\sin\zeta(x, R)}{x^2-a^2}x\phi(x)dx=0
\end{equation}
and
 \begin{equation}
\lim_{R\to\infty}\int^{c}_{d} \biggl( \sqrt{\frac{a}{x}}-1\biggr)
\frac{\cos\zeta(x, R)}{x^2-a^2}x\phi(x)dx=0.
\end{equation}
A combination of the results in $(3.18) - (3.21)$ yields
 \begin{equation*}
\begin{split}
\lim_{R\to\infty}\int^{c}_{d} \delta_R(x, a)\phi(x)dx &= \lim_{R\to\infty}\frac{2}{\pi}  \int^{c}_{d} \frac{\sin\{\zeta(x, R)-\zeta(a, R)\}}{x^2-a^2}x\phi(x)dx\\
&= \lim_{R\to\infty}  \int^{c}_{d} \frac{\sin
R(x-a)}{\pi(x-a)}\cdot \frac{2x}{x+a}  \phi(x)dx
 \end{split}
\end{equation*}
On account of Jordan's theorem on the Dirichlet kernel
\cite[p.473]{Apostol}, we conclude
 \begin{equation}
\lim_{R\to\infty}\int^{c}_{d} \delta_R(x,
a)\phi(x)dx=\frac12[\phi(a^-)+\phi(a^+)].
\end{equation}
Since the number $\varepsilon$ in (3.12) and (3.17) is arbitrary,
it follows from (3.22) that
 \begin{equation*}
\lim_{R\to\infty}\int^{\infty}_{0} \delta_R(x,
a)\phi(x)dx=\frac12[\phi(a^-)+\phi(a^+)],
\end{equation*}
which is equivalent to (3.7).
\end{proof}

\section{Coulomb wave function}
The Coulomb wave function $s(x, l; r)$ is a solution of the
Coulomb wave equation
 \begin{equation}
\frac{d^2y}{dr^2}+\biggl\{x+\biggl( \frac{2}{r}-
\frac{l(l+1)}{r^2}\biggr)\biggr\}y=0,
\end{equation}
that satisfies the initial conditions
 \begin{equation}
s(x, l; 0)=s'(x, l; 0)=0
\end{equation}
and has the asymptotic behavior
 \begin{eqnarray}
&s(x, l; r)=\dfrac{1}{\sqrt{\pi}}x^{-\frac14}[\sin \zeta(x, l;
r)+\varepsilon_1(x, l; r)],
\\
&s'(x, l; r)=\dfrac{1}{\sqrt{\pi}}x^{\frac14}[\cos \zeta(x, l;
r)+\varepsilon_2(x, l; r)],
\end{eqnarray}
where $\varepsilon_j(x, l; r)=O(1/r)$ as $r\to \infty$ uniformly
for $x\geq\delta>0, j=1, 2$, and
 \begin{equation}
\zeta(x, l;
r)=kr+\frac{1}{k}\textup{ln}\;(2kr)-\frac{l\pi}{2}+\arg\Gamma\biggl(
l+1-\frac{i}{k}\biggr)
\end{equation}
with $k=\sqrt{x}$; see \cite[p.236]{Seaton}. For $a>0$ and $x>0$,
we define
 \begin{equation}
 \delta_R(x, a)=\int^R_0 s(x, l; r)s(a, l; r)dr.
\end{equation}
From (4.1) and (4.2), one can show as in Sec. 3 that
 \begin{equation}
 \delta_R(x, a)=\frac{ s(x, l; R)s'(a, l; R)-s'(x, l; R)s(a, l; R)}{x-a}.
\end{equation}
 \begin{theorem}
 For any $a>0$ and any piecewise continuously differentiable function $\phi(x)$ on $(0, \infty)$, we have
 \begin{equation}
\lim_{R\to\infty}\int^{\infty}_{0} \phi(x)\int^{R}_{0}s(x, l;
r)s(a, l; r)dr\,dx=\frac12[\phi(a^+)+\phi(a^-)],
\end{equation}
provided that the integrals
 \begin{equation}
\int^{1}_{0}x^{-\frac{1}{4}}|\phi(x)|dx\qquad and \qquad
\int^{\infty}_{1}x^{-\frac{3}{4}}|\phi(x)|dx
\end{equation}
are convergent.
 \end{theorem}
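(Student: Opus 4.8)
My plan is to follow the template of Section~3 almost verbatim, treating (4.7) as the exact analogue of (3.6). Writing $\delta_R(x,a)$ for the inner integral, I would split $\int_0^\infty \phi(x)\delta_R(x,a)\,dx$ into the three ranges $(0,d)$, $(d,c)$ and $(c,\infty)$, arrange that the two outer pieces are each less than $\varepsilon/2$ uniformly in $R$ by means of the integrability hypotheses in (4.9), and then evaluate the bulk piece $(d,c)$ by reducing it to a Dirichlet kernel. The passage from this three-range estimate to the final statement is then identical to the closing lines of Section~3.

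For the tail I would use the asymptotic formulas (4.3)--(4.4), which for $x\ge 1$ and $R$ large give the amplitude bounds $|s(x,l;R)|\le C x^{-1/4}$ and $|s'(x,l;R)|\le C x^{1/4}$, while $s(a,l;R)$ and $s'(a,l;R)$ stay bounded in $R$ since $a$ is fixed; inserting these into (4.7) yields $|\delta_R(x,a)|\le C x^{-3/4}$ for $x$ large, so that $\int_c^\infty|\phi\,\delta_R|\le C\int_c^\infty x^{-3/4}|\phi|\,dx$ is controlled by the second integral in (4.9). Near the origin the difficulty is that (4.3)--(4.4) hold only uniformly for $x\ge\delta>0$ and degenerate as $x\to 0$. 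Here I would instead invoke a uniform Liouville--Green (WKB) bound -- the approximation flagged in the keywords -- to obtain $|s(x,l;R)|+|s'(x,l;R)|\le C x^{-1/4}$ uniformly in $R$ for small $x$; since $d<a$ keeps $x-a$ bounded away from $0$, (4.7) then gives $|\delta_R(x,a)|\le C x^{-1/4}$ on $(0,d)$, and $\int_0^d|\phi\,\delta_R|$ is controlled by the first integral in (4.9).

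On the bulk interval $(d,c)$ I would substitute (4.3)--(4.4) into (4.7), exactly as (3.8)--(3.9) were substituted into (3.6) to produce (3.18). The error terms coming from $\varepsilon_1,\varepsilon_2$ and from the amplitude factors $(x/a)^{\pm 1/4}-1$ (each of which vanishes at $x=a$, so that after division by $x-a$ it remains bounded) are removed in the limit $R\to\infty$ by the Riemann--Lebesgue lemma and by the generalized lemma of Section~2, just as in (3.19)--(3.21). This leaves the main term $\frac1\pi\int_d^c \frac{\sin[\zeta(x,l;R)-\zeta(a,l;R)]}{x-a}\,\phi(x)\,dx$. To expose a Dirichlet kernel I would change variables by $k=\sqrt x$, $\kappa=\sqrt a$, under which the leading part $\sqrt x\,R$ of the phase (4.5) becomes linear, $x-a=(k-\kappa)(k+\kappa)$, and $dx=2k\,dk$; the integral then takes the form $\frac1\pi\int \frac{\sin R(k-\kappa)}{k-\kappa}\cdot\frac{2k}{k+\kappa}\,\phi(k^2)\,dk$ with a smooth weight equal to $1$ at $k=\kappa$, so that Jordan's theorem on the Dirichlet kernel (as used at the end of Section~3) gives $\tfrac12[\phi(a^+)+\phi(a^-)]$.

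The step I expect to be the main obstacle, and the genuinely new feature relative to Section~3, is that the Coulomb phase (4.5) is not exactly linear in $\sqrt x$: besides $\sqrt x\,R$ it carries the logarithmic term $\frac{1}{\sqrt x}\ln(2\sqrt x\,R)$ and the phase shift $\arg\Gamma(l+1-i/\sqrt x)$. In the difference $\zeta(x,l;R)-\zeta(a,l;R)$ these contribute $(\tfrac{1}{\sqrt x}-\tfrac{1}{\sqrt a})\ln R$ together with $R$-independent smooth terms, so after the substitution $k=\sqrt x$ the true phase is $R(k-\kappa)+\beta(k,R)$ with $\beta(\kappa,R)=0$, rather than the clean $R(k-\kappa)$ of the Bessel case. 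I would handle this by showing that $\beta$ does not affect the limit: on the $O(1/R)$ localization scale of the Dirichlet kernel near $k=\kappa$ one has $\beta=O(R^{-1}\ln R)\to 0$, so $\cos\beta\to 1$ and $\sin\beta\to 0$ there, while away from $k=\kappa$ the corrected phase still has derivative of size $R$, so its contribution is annihilated by the generalized Riemann--Lebesgue lemma of Section~2. Equivalently, one may linearize completely by taking $s=\zeta(x,l;R)-\zeta(a,l;R)$ as the integration variable and verify that the resulting integrand converges to $\frac{\sin s}{s}$ times $\phi(a^{\pm})$ with a uniform integrable tail bound; either route reduces the bulk term to the standard sine-kernel evaluation, but the verification that the logarithmic $\ln R$ correction is asymptotically negligible is where the real work lies.
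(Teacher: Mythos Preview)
Your three-region decomposition and the tail estimate via (4.7) and (4.3)--(4.4) match the paper exactly. For the near-origin piece the paper does more than a single WKB appeal: since (4.3)--(4.4) are not uniform down to $x=0$, it establishes the uniform bounds $k^{1/2}s(k^2,l;r)=O(1)$ and $s'(k^2,l;r)=O(1)$ by treating two regimes separately, $kr\to\infty$ (Liouville--Green applied to the rescaled equation (4.14), as you suggest) and $kr$ bounded (via the Whittaker-function representation of $s$ and Buchholz's Bessel-series expansion of $M_{\kappa,\lambda}$). Your sketch covers only the first regime.

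The substantive divergence is at the step you correctly flag as the obstacle, the $\ln R$ term in the phase. The paper does \emph{not} use either of your proposed routes. Instead it writes $\zeta(x,l;R)=\overline{\zeta}(x,R)+\theta(x,l)$ with $\overline{\zeta}(x,R)=\sqrt{x}\,R+\tfrac{1}{\sqrt{x}}\ln R$ and $\theta$ independent of $R$; the $\theta$-contribution is removed by the generalized Riemann--Lebesgue lemma (since $\theta(x)-\theta(a)=O(x-a)$), and the remaining integral $\int_{a-\eta}^{a}\frac{\sin[\overline{\zeta}(x,R)-\overline{\zeta}(a,R)]}{\pi(x-a)}\,dx$ is evaluated by \emph{contour integration}: one takes the imaginary part of the corresponding exponential, observes that $\overline{\zeta}(z,R)$ is analytic in $\mathrm{Re}\,z>0$ with $\mathrm{Im}\,\overline{\zeta}(z,R)\to+\infty$ off the real axis, and applies Cauchy's theorem to a rectangle in the upper half-plane indented by a quarter-circle at $z=a$; the non-real sides vanish as $R\to\infty$ and the indentation gives exactly $-i\pi/2$, hence the value $\tfrac12$. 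Your localization heuristic is not a proof as written---the sine kernel does not localize in $L^1$ to an $O(1/R)$ window, and after expanding $\sin(A+\beta)$ the factor $\sin\beta/(k-\kappa)$ has sup-norm of order $\ln R$, so the lemma of Section~2 (which needs a fixed $\psi$ and a uniformly bounded $g$) does not apply directly. These issues are probably surmountable, but the contour argument bypasses them cleanly.
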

\begin{proof} From formulas (4.3) and (4.4), for any $b>\max\{a,
1\}$ there exists a number $M_1>0$ such that
 \begin{equation*}
 \begin{split}
&|s(x, l; R) s'(a, l; R)|\leq M_1x^{-\frac{1}{4}},\\
&|s'(x, l; R) s(a, l; R)|\leq M_1x^{\frac{1}{4}}
\end{split}
\end{equation*}
for $R\geq 1$ and $x\geq b$. Hence, it follows from (4.7) that
 \begin{equation*}
\int^{\infty}_b| \delta_R(x, a)\phi(x)|dx\leq 2M_1\int^\infty_b
\frac{x^{\frac14}|\phi(x)|}{x-a}dx\leq
M_2\int^{\infty}_{b}x^{-\frac{3}{4}}|\phi(x)|dx,
\end{equation*}
where $M_2=2bM_1/(b-a)$. By hypothesis, the last integral is
convergent; so for any $\varepsilon>0$ there is a number $c>b$
such that
 \begin{equation}
\int^{\infty}_c| \delta_R(x, a)\phi(x)|dx<
\frac{\varepsilon}{2}\qquad \qquad\textup{for all}\; R\geq 1.
\end{equation}

To prove that there exists a number $d>0$ such that
 \begin{equation}
\int^{d}_0| \delta_R(x, a)\phi(x)|dx< \frac{\varepsilon}{2}\qquad
\qquad \textup{for all}\;R\geq 1,
\end{equation}
we first need to demonstrate that
 \begin{equation}
k^{\frac12}s(k^2, l; r)=O(1)      \qquad \qquad \textup{as}\;r\to
\infty,
\end{equation}
 \begin{equation}
s'(k^2, l; r)=O(1)      \qquad \qquad \textup{as}\;r\to \infty,
\end{equation}
uniformly for all sufficiently small $k\geq0$. (Recall:
$k=\sqrt{x}$.) This can be done by considering two separate cases:
(i) $kr\to\infty$, and (ii) $kr$ bounded. In case (i), we first
make the change of variable $\rho =kr$ and set
$\omega(\rho)=y(\rho/k)=y(r)$ so that equation (4.1) becomes
  \begin{equation}
\frac{d^2\omega}{d\rho^2}+\biggl(
1+\frac{2}{k\rho}-\frac{l(l+1)}{\rho^2}\biggr)\omega=0,
\end{equation}
and then apply the Liouville-Green transformation given in
\cite[p.196]{Olver} with  $f(\rho)=1+(2/k\rho)$ and
$g(\rho)=l(l+1)/\rho^2$. The result is that equation (4.14) has a
solution $\omega_1(k; \rho)$ such that
 \begin{equation}
\omega_1(k; \rho)\sim \biggl(
1+\frac{2}{k\rho}\biggr)^{\hspace{-0.5mm}-\frac14}\sin\biggl\{\int\biggl(1+\frac{2}{k\rho}\biggr)^{\hspace{-0.5mm}\frac12}
d\rho \biggr\}  \qquad \qquad \textup{as} \;\rho\to\infty
\end{equation}
and
 \begin{equation}
\frac{d}{d\rho}\omega_1(k; \rho)\sim \biggl(
1+\frac{2}{k\rho}\biggr)^{\hspace{-0.5mm}\frac14}\cos\biggl\{\int\biggl(1+\frac{2}{k\rho}\biggr)^{\hspace{-0.5mm}\frac12}
d\rho \biggr\}  \qquad \qquad \textup{as}\; \rho\to\infty.
\end{equation}
With a suitable choice of the integration constant  and for fixed
$k>0$, we have
 \begin{equation*}
\int \biggl( 1+\frac{2}{k\rho}\biggr)^{\frac12}d\rho=\zeta^*(k, l;
\rho)+O\biggl(\frac1\rho\biggr)  \qquad \qquad \textup{as}\;
\rho\to\infty,
\end{equation*}
where
 \begin{equation*}
\zeta^*(k, l; \rho)=\rho+\frac1k \ln 2\rho -\frac{l\pi}{2}+\arg
\Gamma\biggl(l+1-\frac{i}{k}\biggr)
\end{equation*}
which is exactly equal to the function $\zeta(k^2, l; r)$ given in
(4.5). For fixed $k>0$, we can compare the behavior of $s(k^2, l;
r)$ given in (4.3) and that given in (4.15). The conclusion is
 \begin{equation}
s(k^2, l; r)=\frac{1}{\sqrt{\pi k}} \omega_1 (k; \rho),
\end{equation}
from which we also obtain
 \begin{equation}
s'(k^2, l; r)=\sqrt{\frac{k}{\pi}}\frac{d\omega_1}{d\rho}.
\end{equation}
Since $\omega_1 (k; \rho)=O(1)$ and $\omega'_1 (k;
\rho)=O(k^{-\frac14})$ for all small $k$ and large $\rho$ on
account of (4.15) and (4.16), the order estimates in (4.12) and
(4.13) are established.

In case (ii), we first recall the function
 \begin{equation*}
f(k^2, l; r)=\frac{(i/k)^{l+1}}{\Gamma(2l+2)}M_{i/k,
l+\frac12}(-2ikr),
\end{equation*}
where $M_{\kappa, \lambda}(z)$ is  a Whittaker function; see
Seaton \cite[eqs. (14) \& (22)]{Seaton}. This function is related
to the Coulomb wave function $s(k^2, l; r)$ via
 \begin{equation*}
s(k^2, l; r)=\biggl[\frac{A(k^2,
l)}{2(1-e^{-2\pi/k})}\biggr]^\frac12 f(k^2, l; r),
\end{equation*}
where $A(k^2, l)$ is a polynomial of degree $l$ in $k^2$; see
\cite[eq.(114)]{Seaton}. In view of the convergent expansion
\cite[$\S$ 7, eq.(16)]{Buchholz}
 \begin{equation*}
M_{\kappa,
\lambda}(z)=\Gamma(2\lambda+1)2^{2\lambda}z^{\lambda+\frac12}\sum^\infty_{n=0}p^{(2\lambda)}_n(z)\frac{J_{2\lambda+n}(2\sqrt{z\kappa})}{(2\sqrt{z\kappa})^{2\lambda+n}},
\end{equation*}
where the $p^{(2\lambda)}_n(z)$ are polynomials in $z^2$, we have
for bounded $kr$
\begin{equation*}
f(k^2, l;
r)=C(kr)r^\frac14\cos\biggl(2\sqrt{2r}-\frac{\pi}{2}(2l+1)-\frac14\pi\biggr)+O\biggl(\frac{1}{r^{1/4}}\biggr)
\end{equation*}
as $r\to\infty$, where $C(kr)$ is a polynomial of $kr$. Hence,
\begin{equation}
\begin{split}
s(k^2, l; r)&=C^*(kr)k^{-\frac14}\biggl[\frac{A(k^2,
l)}{2(1-e^{-2\pi/k})}\biggr]^\frac12\cos\biggl(2\sqrt{2r}-\frac{\pi}{2}(2l+1)-\frac14\pi\biggr)\\
&\hspace*{3.8mm}+O\biggl(\frac{1}{r^{1/4}}\biggr)
\end{split}
\end{equation}
and
\begin{equation}
\begin{split}
s'(k^2, l; r)&=-C(kr)r^{-\frac14}\biggl[\frac{A(k^2,
l)}{(1-e^{-2\pi/k})}\biggr]^\frac12\sin\biggl(2\sqrt{2r}-\frac{\pi}{2}(2l+1)-\frac14\pi\biggr)\\
&\hspace*{3.8mm}+O\biggl(\frac{1}{r^{3/4}}\biggr),
\end{split}
\end{equation}
where $C^*(kr)=(kr)^{\frac14}C(kr)$, again proving (4.12) and
(4.13).

From (4.12) and (4.13), it follows that there are constants
$0<\rho<\min\{a, 1\}$, $R_0>0$ and $N_1>0$ such that
\begin{equation*}
|x^{\frac14}s(x, l; r)|\leq N_1 \qquad \textup{and}\qquad |s'(x,
l; r)|\leq N_1
\end{equation*}
for all $r\geq R_0$ and $0\leq x\leq \rho$. Furthermore, by (4.7),
\begin{equation*}
|\delta_R(x, a)|\leq
\frac{N^2_1}{|x-a|}(x^{-\frac14}+a^{-\frac14})\leq 2
N^2_1\frac{x^{-\frac14}}{|x-a|}
\end{equation*}
for $0<x\leq\rho$ and $R\geq R_0$, and
\begin{equation*}
\int^\rho_0 |\delta_R(x, a)\phi(x)|dx\leq N_2 \int^\rho_0
x^{-\frac14}|\phi(x)|dx,
\end{equation*}
where $N_2=2N_1^2/(a-\rho)$. By hypothesis, the last integral is
convergent, thus establishing (4.11).

Let us now consider the case when $x$ lies in the interval $(d,
c)$. From (4.3), (4.4) and (4.7), we have
 \begin{equation}
 \begin{split}
\pi(x-a)\delta_R(x, a)&= \biggl(\frac{a}{x}\biggr)^{\hspace{-0.5mm}\frac14} \sin\zeta(x, l; R)\cos\zeta(a, l; R)  \\
&\hspace*{5mm}-\biggl(\frac{x}{a}\biggr)^{\hspace{-0.5mm}\frac14}
\cos\zeta(x, l; R)\sin\zeta(a, l; R)+\varepsilon(x, a; R),
\end{split}
\end{equation}
where $\varepsilon(x, a; R)/(x-a)$ is continuous in $(0, \infty)$
and $\varepsilon(x, a; R)=O(1/R)$ as $R\to\infty$ uniformly for
$x\in(d, c)$. As a consequence, we obtain
\begin{equation}
\lim_{R\to \infty}\int^c_d \frac{\varepsilon(x, a;
R)}{x-a}\phi(x)dx=0.
\end{equation}

For any $A$ and $B$ satisfying $d\leq A<B\leq c$, an integration
by parts yields
 \begin{equation*}
 \begin{split}
\int^B_A \sin \zeta(k^2, l; R)dk&=-\frac{\cos\zeta(k^2, l; R)}{\zeta_k(k^2, l; R)} \biggl|^B_A  \\
&\hspace*{4mm}+\int^B_A\frac{\partial}{\partial
k}\biggl(\frac{1}{\zeta_k(k^2, l; R)}\biggr)\cos\zeta(k^2, l;
R)dk,
\end{split}
\end{equation*}
where $\zeta_k(k^2, l; R)$ denotes the derivative of $\zeta(k^2,
l; R)$ with respect to $k$. From (4.5), it is readily seen that
for $d\leq k^2\leq c$,
\begin{equation*}
\zeta_k(k^2, l; R)\to \infty
\qquad\textup{and}\qquad\frac{\partial}{\partial
k}\biggl(\frac{1}{\zeta_k(k^2, l; R)}\biggr)\to0
\end{equation*}
as $R\to\infty$. Hence,
\begin{equation}
\lim_{R\to \infty}\int^B_A \sin\zeta(k^2, l; R)dk=0.
\end{equation}
Similarly, we also have
\begin{equation}
\lim_{R\to \infty}\int^B_A \cos\zeta(k^2, l; R)dk=0.
\end{equation}
Let $\eta>0$ be an arbitrary number such that $d<a-\eta<a+\eta<c$.
A combination of (4.23), (4.24) and the generalized
Riemann-Lebesgue lemma gives
\begin{equation}
\lim_{R\to
\infty}\biggl(\int^{a-\eta}_d+\int^c_{a+\eta}\biggr)\biggl(\frac{a}{x}\biggr)^{\hspace{-0.5mm}\frac14}\frac{\phi(x)}{x-a}\sin\zeta(x,
l; R)dx=0
\end{equation}
and
\begin{equation}
\lim_{R\to
\infty}\biggl(\int^{a-\eta}_d+\int^c_{a+\eta}\biggr)\biggl(\frac{x}{a}\biggr)^{\hspace{-0.5mm}\frac14}\frac{\phi(x)}{x-a}\cos\zeta(x,
l; R)dx=0.
\end{equation}
By the same reasoning, we have
\begin{equation}
\lim_{R\to
\infty}\int^{a+\eta}_{a-\eta}\biggl[\biggl(\frac{a}{x}\biggr)^{\hspace{-0.5mm}\frac14}-1\biggr]\frac{\phi(x)}{x-a}\sin\zeta(x,
l; R)dx=0
\end{equation}
and
\begin{equation}
\lim_{R\to
\infty}\int^{a+\eta}_{a-\eta}\biggl[\biggl(\frac{x}{a}\biggr)^{\hspace{-0.5mm}\frac14}-1\biggr]\frac{\phi(x)}{x-a}\cos\zeta(x,
l; R)dx=0.
\end{equation}
From (4.21), (4.22) and $(4.25)-(4.28)$, it follows that
 \begin{equation}
 \begin{split}
\lim_{R\to \infty}\int^{c}_{d}&\delta_R(x, a)\phi(x)dx\\
&\hspace*{-1mm}=\lim_{R\to \infty}\int^{a+\eta}_{a-\eta}\frac{\sin\{\zeta(x, l; R)-\zeta(a, l; R)\}}{x-a}\phi(x)dx  \\
&\hspace*{-1mm}=\lim_{R\to
\infty}\biggl(\int^{a}_{a-\eta}+\int^{a+\eta}_{a}\biggr)\frac{\sin\{\zeta(x,
l; R)-\zeta(a, l; R)\}}{x-a}\phi(x)dx.
\end{split}
\end{equation}

Since $\phi(x)$ is piecewise continuously differentiable in $(0,
\infty)$, it is continuously differentiable in $(a-\eta, a)$ for
sufficiently small $\eta>0$, and $\phi(x)-\phi(a^-)/(x-a)$ is
integrable on $(a-\eta, a)$. By (4.23), (4.24) and the generalized
Riemann-Lebesgue lemma, we have
\begin{equation}
\lim_{R\to
\infty}\int^{a}_{a-\eta}\frac{\phi(x)-\phi(a^-)}{x-a}\sin
\{\zeta(x, l; R)-\zeta(a, l; R)\}dx=0,
\end{equation}
or equivalently
\begin{equation*}
\begin{split}
\lim_{R\to \infty}\int^{a}_{a-\eta}&\frac{\sin \{\zeta(x, l;
R)-\zeta(a, l; R)\}}{\pi(x-a)}\phi(x)dx\\
&\hspace*{18.7mm}=\phi(a^-)\lim_{R\to
\infty}\int^{a}_{a-\eta}\frac{\sin \{\zeta(x, l; R)-\zeta(a, l;
R)\}}{\pi(x-a)} dx.
\end{split}
\end{equation*}

To obtain the value of the limit on the right-hand side of the
last equation, we shall use the Cauchy residue theorem. Let
\begin{equation}
\begin{split}
\overline{\zeta}(x, R)&=\sqrt{x}R+\frac{1}{\sqrt{x}}\ln R,\\
 \theta(x,
l)&=\frac{1}{\sqrt{x}}\ln(2\sqrt{x})-\frac{l\pi}{2}+\arg
\Gamma\biggl(l+1-\frac{i}{\sqrt{x}}\biggr)
\end{split}
\end{equation}
so that
\begin{equation}
\zeta(x, l; R)=\overline{\zeta}(x; R)+ \theta(x, l).
\end{equation}
Furthermore, let $\Gamma$ denote the positively oriented closed
contour depicted in Figure 1 below. It consists of a horizontal
line segment $\Gamma_3$, two vertical line segments $\Gamma_2$ and
$\Gamma_4$, a quarter-circle $\Sigma$ centered at $z=a$ with
radius $r$, and the interval $\Gamma_1$ on the positive
real-axis. The entire region bounded by $\Gamma$ lies in the first
quadrant $\{ z\in \mathbb{C}: \textup{Re}\; z>0$ and $
\textup{Im}\;z\geq 0\}$. Consider the complex-value function
\begin{equation}
F_R(z)=\frac{e^{i[\overline{\zeta}(z, R)-\overline{\zeta}(a,
R)]}}{z-a}.
\end{equation}
Since $\overline{\zeta}(z, R)$ is analytic in the right
half-plane, by Cauchy's theorem
\begin{equation}
\int_\Gamma F_R(z)dz=0.
\end{equation}
By Cauchy's residue theorem, we also have
\begin{equation}
\lim_{r\to 0^+}\int_{\Sigma} F_R(z)dz=-i \pi/2,
\end{equation}
where $r$ is the radius of the quarter-circle $\Sigma$.

\begin{figure}[htp]
\begin{center}
  \includegraphics[width=3in]{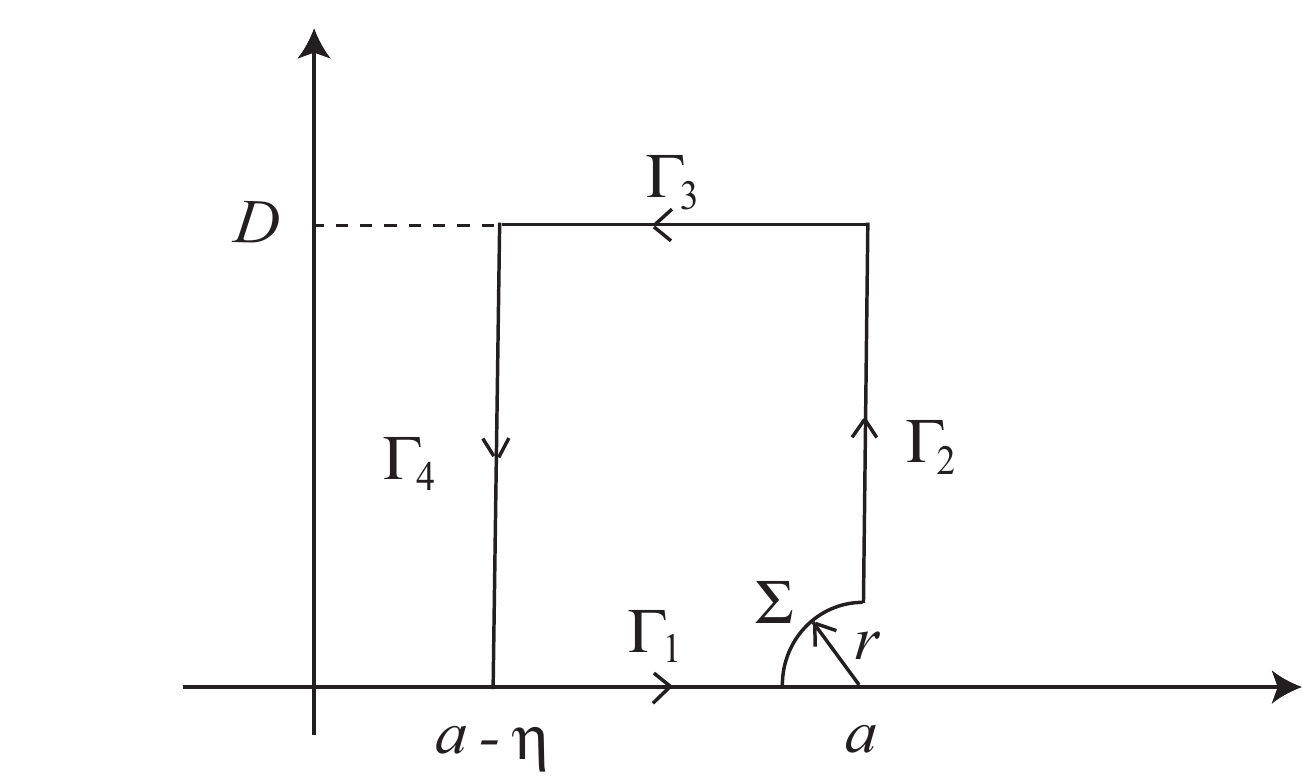}\\
  \caption{Contour $\Gamma$.}\label{AIMS}
  \end{center}
\end{figure}

For $z\in \Gamma_3$, we write $z=u+iD$. Since $\overline{\zeta}(a,
R)$ is real, a simple estimation gives
\begin{equation}
|F_R(z)|\leq \frac{1}{D}e^{-\textup{Im}\;\overline{\zeta}(z, R)},
\qquad \qquad z\in \Gamma_3.
\end{equation}
From (4.31), it is readily seen that $\lim_{R\to
\infty}\textup{Im}\;\overline{\zeta}(z, R)=+\infty$ uniformly for
$z\in \Gamma_3$. Hence, $\lim_{R\to
\infty}e^{-\textup{Im}\;\overline{\zeta}(z, R)}=0$ uniformly for
$z\in \Gamma_3$. From (4.36), it follows that
\begin{equation}
\lim_{R\to \infty}\int_{\Gamma_3}F_R(z)dz=0.
\end{equation}

For  $z\in \Gamma_4$, we write $z=a-\eta+iv$. Clearly
\begin{equation}
|F_R(z)|\leq \frac{1}{\eta}e^{-\textup{Im}\;\overline{\zeta}(z,
R)}, \qquad \qquad z\in \Gamma_4.
\end{equation}
Let $\sigma_1>0$ be any small number. From (4.31), we have
$\lim_{R\to \infty}\textup{Im}\;\overline{\zeta}(z, R)=+\infty$
uniformly for $z\in \Gamma_4$ and $\textup{Im}\; z=v \geq
\sigma_1$. Thus, $\lim_{R\to
\infty}e^{-\textup{Im}\;\overline{\zeta}(z, R)}=0$ uniformly for
$z\in \Gamma_4\cap\{z:\textup{Im}\; z\geq \sigma_1 \}$ and
\begin{equation*}
\lim_{R\to
\infty}\int_{\Gamma_4}e^{-\textup{Im}\;\overline{\zeta}(z, R)}dz
=\lim_{R\to \infty}\int^{a-\eta+i
\sigma_1}_{a-\eta}e^{-\textup{Im}\;\overline{\zeta}(z, R)}dz.
\end{equation*}
From (4.31), it also follows that there is a constant $M_5>0$ such
that $e^{-\textup{Im}\;\overline{\zeta}(z, R)}\leq M_5$. Hence
\begin{equation*}
\biggl|\lim_{R\to
\infty}\int_{\Gamma_4}e^{-\textup{Im}\;\overline{\zeta}(z, R)}dz
\biggr|\leq M_5\sigma_1.
\end{equation*}
Since $\sigma_1$ can be arbitrarily small, we obtain
\begin{equation}
\lim_{R\to
\infty}\int_{\Gamma_4}e^{-\textup{Im}\;\overline{\zeta}(z, R)}dz
=0.
\end{equation}
Coupling (4.38) and (4.39) gives
\begin{equation}
\lim_{R\to \infty}\int_{\Gamma_4}F_R(z)dz =0.
\end{equation}
In a similar manner, one can establish
\begin{equation}
\lim_{R\to \infty}\int_{\Gamma_2}F_R(z)dz =0.
\end{equation}
By a combination of (4.34), (4.35), (4.37) and $(4.40)-(4.41)$, we
obtain
\begin{equation}
\begin{split}
\lim_{R\to \infty}\int^a_{a-\eta}&\frac{\sin \{\overline{\zeta}(x, R)-\overline{\zeta}(a, R)\}}{\pi(x-a)}dx\\
&\hspace*{16.2mm}=-\frac1\pi \textup{Im}\;\biggl\{\lim_{R\to
\infty}\lim_{r\to 0^+}\int_{\Sigma}F_R(z)dz\biggr\}=\frac12.
\end{split}
\end{equation}
Using (4.42), we are now ready to handle the limit on the
right-hand side of the equation following (4.30).

For simplicity, let us write $\theta(x)=\theta(x, l)$; cf. (4.31).
By the mean-value theorem,
$\cos[\theta(x)-\theta(a)]=1+O\{(x-a)^2\}$ for $x$ near $a$.
Hence, by the  generalized Riemann-Lebesgue lemma, we have from
(4.42)
\begin{equation}
\lim_{R\to \infty}\int^{a}_{a-\eta}
\frac{\sin\{\overline{\zeta}(x, R)-\overline{\zeta}(a,
R)\}\cos[\theta(x)-\theta(a)]}{\pi(x-a)}dx=\frac12,
\end{equation}
and also from (4.23) and (4.24)
\begin{equation}
\lim_{R\to \infty}\int^{a}_{a-\eta}
\frac{\cos\{\overline{\zeta}(x, R)-\overline{\zeta}(a,
R)\}\sin[\theta(x)-\theta(a)]}{\pi(x-a)}dx=0.
\end{equation}
Upon using an addition formula, it follows from (4.43) and (4.44)
that
\begin{equation}
\lim_{R\to \infty}\int^{a}_{a-\eta} \frac{\sin\{\zeta(x, l;
R)-\zeta(a, l; R)\}}{\pi(x-a)}dx=\frac12.
\end{equation}
Coupling (4.30) and (4.45), we obtain
\begin{equation}
\lim_{R\to \infty}\int^{a}_{a-\eta} \frac{\sin\{\zeta(x, l;
R)-\zeta(a, l; R)\}}{\pi(x-a)}\phi(x)dx=\frac12\phi(a^-).
\end{equation}
In a similar manner, we  also have
\begin{equation}
\lim_{R\to \infty}\int_{a}^{a+\eta} \frac{\sin\{\zeta(x, l;
R)-\zeta(a, l; R)\}}{\pi(x-a)}\phi(x)dx=\frac12\phi(a^+).
\end{equation}
A combination of (4.29), (4.46) and (4.47) yields
\begin{equation}
\lim_{R\to \infty}\int^{c}_{d} \delta_R(x,
a)\phi(x)dx=\frac12[\phi(a^-)+\phi(a^+)].
\end{equation}
Since $\varepsilon$ in (4.10) and (4.11) can be arbitrarily small,
(4.8) now follows from (4.48). This completes the proof of the
theorem.
\end{proof}

\section{Airy and parabolic cylinder functions}

 We now turn our attention to the integral representation (1.7). Here, the interval of concern is the whole real line. However, the argument for this result remains similar to that for Theorems 1 \& 2, and we will keep it brief. As before, we let $b$ and $\eta$ be positive numbers such that $b>\max\{1, |a|\}$. The Airy function $\textup{Ai}(t-x)$ is a solution of the equation
 \begin{equation}
\frac{d^2y}{dt^2}+(x-t)y=0, \qquad \qquad -\infty<t<\infty,
\end{equation}
and satisfies
 \begin{equation}
\textup{Ai} (\infty)=\textup{Ai}' (\infty)=0,
\end{equation}
 \begin{equation}
\textup{Ai} (-t-x)=\frac{1}{\sqrt{\pi}}(t+x)^{-1/4}[\sin\zeta(x,
t)+\varepsilon_1(x, t)],
\end{equation}
and
 \begin{equation}
\textup{Ai}' (-t-x)=-\frac{1}{\sqrt{\pi}}(x+t)^{1/4}[\cos\zeta(x,
t)+\varepsilon_2(x, t)],
\end{equation}
where $\varepsilon_1(x, t)=O(t^{-3/2})$ and $\varepsilon_2(x,
t)=O(t^{-3/2})$, as $t\to \infty$, uniformly for $x>-b$, and where
 \begin{equation}
\zeta(x, t)=\frac{2}{3}(t+x)^{3/2}+\frac{\pi}{4}.
\end{equation}
Define
 \begin{equation}
\delta_R(x, a)=\int^\infty_{-R}\textup{Ai}(t-x)\textup{Ai}(t-a)dt.
\end{equation}
From (5.2), we have
 \begin{equation}
\delta_R(x,
a)=\frac{\textup{Ai}(-R-x)\textup{Ai}'(-R-a)-\textup{Ai}(-R-a)\textup{Ai}'(-R-x)}{x-a};
\end{equation}
see (3.6).

  \begin{theorem} For any $a\in\mathbb{R}$ and any piecewise continuously differentiable function $\phi(x)$ on $(-\infty, \infty)$, we have
 \begin{equation}
\lim_{R\to\infty}\int^\infty_{-\infty}\phi(x)\int^\infty_{-R}\textup{Ai}
(t-x)\textup{Ai}(t-a)dt\,dx=\frac12[\phi(a^-)+\phi(a^+)],
\end{equation}
provided that the two integrals
 \begin{equation}
\int^{-1}_{-\infty}|x|^{-\frac34}|\phi(x)|dx\qquad and \qquad
\int^\infty_{1}x^{-\frac34}|\phi(x)|dx
\end{equation}
are convergent.
  \end{theorem}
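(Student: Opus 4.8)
The plan is to follow the pattern of Theorems 1 and 2, using the identity (5.7). Fix $M>\max\{1,|a|\}$, to be chosen large, and write
\[
\int_{-\infty}^{\infty}\phi(x)\delta_R(x,a)\,dx=\Bigl(\int_{-\infty}^{-M}+\int_{-M}^{M}+\int_{M}^{\infty}\Bigr)\phi(x)\delta_R(x,a)\,dx .
\]
I would show that the two tail integrals are smaller than $\varepsilon/2$ uniformly in $R\ge R_0$, by combining a uniform pointwise bound $|\delta_R(x,a)|\le C|x|^{-3/4}$ with the convergence hypotheses (5.9), and that the central integral over $(-M,M)$ tends to $\tfrac12[\phi(a^-)+\phi(a^+)]$; since $\varepsilon$ is arbitrary this gives (5.8). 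Throughout, the tail estimates use only the standard uniform bounds $|\textup{Ai}(s)|\le C(1+|s|)^{-1/4}$, $|\textup{Ai}'(s)|\le C(1+|s|)^{1/4}$ for $s\le 0$ and the rapid (super-exponential) decay of $\textup{Ai},\textup{Ai}'$ for $s\ge 0$, while the central integral uses the sharp asymptotics (5.3)--(5.4).

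For the right tail, $x>M$ forces the argument $-R-x$ to be negative with magnitude $R+x$ large, so the bounds above give $|\textup{Ai}(-R-x)|\le C(R+x)^{-1/4}$, $|\textup{Ai}'(-R-x)|\le C(R+x)^{1/4}$, and likewise for $-R-a$. Substituting into (5.7) yields $|\delta_R(x,a)|\le \frac{C}{x-a}\bigl[(\frac{R+a}{R+x})^{1/4}+(\frac{R+x}{R+a})^{1/4}\bigr]$. Since $a<x$ the first ratio is at most $1$, while the second is decreasing in $R$ and so is bounded on $R\ge R_0$ by its value at $R_0$, which is $O(x^{1/4})$ as $x\to\infty$; hence $|\delta_R(x,a)|\le C\,x^{1/4}/(x-a)=O(x^{-3/4})$ uniformly in $R\ge R_0$. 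The second integral in (5.9) then makes $\int_M^\infty|\delta_R(x,a)\phi(x)|\,dx$ small.

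The left tail is the main obstacle and is genuinely new compared with Theorems 1 and 2, because for $x<-M$ the argument $-R-x$ sweeps across the Airy turning point as $R$ grows: it lies in the rapidly decaying regime when $R+x<0$, in the turning-point regime when $|R+x|$ is of order one, and in the oscillatory regime when $R+x$ is large. I would therefore split $R\ge R_0$ into these three ranges and estimate $\delta_R(x,a)$ in each, using the global bounds above (and, for $R+x<-1$, the rapid decay of $\textup{Ai}(-R-x)$). In every range, using $|x-a|\sim|x|$ for $x<-M$, the largest term is of size $O(|x|^{1/4}/|x|)=O(|x|^{-3/4})$, so that $|\delta_R(x,a)|\le C|x|^{-3/4}$ holds uniformly in $R\ge R_0$; the first integral in (5.9) then makes $\int_{-\infty}^{-M}|\delta_R(x,a)\phi(x)|\,dx$ small. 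Establishing this bound uniformly in $R$, and in particular checking that the constant does not blow up as $R+x\to 0$ at the turning point, is the delicate point of the whole argument.

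On the central interval I would enlarge $b$ so that $b>M$ (the asymptotics (5.3)--(5.4) are valid for $x>-b$ for every fixed $b$), and substitute (5.3)--(5.4) into (5.7). Exactly as in (3.18)--(3.21), the amplitude factors $(\frac{R+a}{R+x})^{\pm1/4}$ differ from $1$ by $O(1/R)$ and the errors $\varepsilon_j$ are $O(R^{-3/2})$, so by the generalized Riemann--Lebesgue lemma of Section 2 all of these contribute nothing in the limit, leaving
\[
\lim_{R\to\infty}\int_{-M}^{M}\frac{\sin\{\zeta(x,R)-\zeta(a,R)\}}{\pi(x-a)}\,\phi(x)\,dx .
\]
Here the phase is nonlinear, $\zeta(x,R)-\zeta(a,R)=\tfrac23[(R+x)^{3/2}-(R+a)^{3/2}]$, but $\zeta_x(x,R)=(R+x)^{1/2}\to\infty$, so the integration-by-parts argument used for (4.23)--(4.24) shows, via the generalized Riemann--Lebesgue lemma, that the portion over $(-M,M)\setminus(a-\eta,a+\eta)$ vanishes. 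On $(a-\eta,a+\eta)$ a Taylor expansion gives $\zeta(x,R)-\zeta(a,R)=(R+a)^{1/2}(x-a)+O\{(x-a)^2/\sqrt{R}\}$, so writing $\lambda=(R+a)^{1/2}\to\infty$ and $h=x-a$, the integral reduces, up to terms that vanish as $R\to\infty$, to $\int_{-\eta}^{\eta}\frac{\sin(\lambda h)}{\pi h}\phi(a+h)\,dh$. Jordan's theorem on the Dirichlet kernel \cite[p.473]{Apostol} then gives $\tfrac12[\phi(a^-)+\phi(a^+)]$, completing the proof.
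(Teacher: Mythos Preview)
Your proposal is correct and follows essentially the same route as the paper: bound both tails via (5.7) by $C|x|^{-3/4}$ and invoke the hypotheses (5.9), then on the central interval substitute (5.3)--(5.4), use the generalized Riemann--Lebesgue lemma (in the manner of (4.29)) to localize and strip the amplitude factors, expand $\zeta(x,R)-\zeta(a,R)=\sqrt{R}(x-a)+O(R^{-1/2})$, and finish with Jordan's theorem on the Dirichlet kernel. Your flagging of the left tail as ``the main obstacle'' is somewhat overstated---the paper dispatches both tails in one sentence, and the uniform bound there follows immediately from the global estimates $|\textup{Ai}(s)|\le C(1+\max(-s,0))^{-1/4}$ and $|\textup{Ai}'(s)|\le C(1+\max(-s,0))^{1/4}$ together with $|x-a|\gtrsim |x|$ and $R+a\le 2|x|$ whenever $R+x\le 0$, so no separate turning-point analysis is really needed since $\textup{Ai}$ and $\textup{Ai}'$ are simply bounded near the origin.
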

 \begin{proof} Using the asymptotic formulas (5.3) and (5.4), one
can show from (5.7) that there are positive constants $M_1$ and
$M_2$ such that
 \begin{equation*}
\int^\infty_{b}|\delta_R(x, a)\phi(x)|dx\leq M_1\int^\infty_b
x^{-\frac34}|\phi(x)|dx
\end{equation*}
and
 \begin{equation*}
\int^{-b}_{-\infty}|\delta_R(x, a)\phi(x)|dx\leq
M_2\int_{-\infty}^{-b} |x|^{-\frac34}|\phi(x)|dx
\end{equation*}
for $R>2|a|+1$. On account of the convergence of the two integrals
in (5.9), for any $\varepsilon>0$ there is a constant $c>b$ such
that
 \begin{equation}
\biggl(\int^{-c}_{-\infty}+\int^{\infty}_{c}\biggr)|\delta_R(x,
a)\phi(x)|dx< \varepsilon
\end{equation}
for all $R>2|a|+1$.

In view of the asymptotic formulas (5.3) and (5.4), equation (5.7)
also gives
 \begin{equation}
 \begin{split}
\pi(x-a)\delta_R(x, a)=&\biggl( \frac{R+a}{R+x}\biggr)^{\hspace{-0.5mm}\frac14}\sin\zeta(x, R)\cos \zeta(a, R)\\
&-\biggl( \frac{R+x}{R+a}\biggr)^{\hspace{-0.5mm}\frac14}\cos\zeta(x, R)\sin \zeta(a, R)+\varepsilon(x, a ; R),
\end{split}
\end{equation}
where $\varepsilon(x, a ; R)=O(R^{-5/4})$, as $R\to \infty$,
uniformly for $x, a \in (-b, b)$. In a manner similar to (4.29),
by using the generalized Riemann-Lebesgue lemma we have
 \begin{equation}
\lim_{R\to \infty}\int^{c}_{-c}\delta_R(x, a)\phi (x)dx=\lim_{R\to
\infty}\int^{a+\eta}_{a-\eta}\frac{\sin\{\zeta(x, R)-\zeta(a,
R)\}}{\pi (x-a)}\phi(x)dx,
\end{equation}
for any piecewise continuously differentiable function $\phi(x)$
and any $\eta>0$. Since $\zeta(x, R)-\zeta(a,
R)=\sqrt{R}(x-a)+O(1/\sqrt{R})$ for large $R$ and bounded $x$ and
$a$, we also have $\sin \{\zeta(x, R)-\zeta(a, R)\}=\sin
(\sqrt{R}(x-a))+O(1/\sqrt{R})$ as $R\to \infty$ for bounded $x$
and $a$. Thus, equation (5.12) gives
 \begin{equation}
\lim_{R\to \infty}\int^{c}_{-c}\delta_R(x, a)\phi (x)dx=\lim_{R\to
\infty}\int^{a+\eta}_{a-\eta}\frac{\sin\sqrt{R}(x-a)}{\pi
(x-a)}\phi(x)dx.
\end{equation}
By Jordan's theorem on the Dirichlet kernel \cite[p.473]{Apostol},
the value of the last limit is $[\phi(a^-)+\phi(a^+)]/2$. The
final result (5.8) now follows from (5.10) and (5.13).
\end{proof}

The parabolic cylinder function $W(a, x)$ is a solution of Weber's
equation
 \begin{equation}
\frac{d^2y}{dx^2}+\biggl(\frac14 x^2-a\biggr)y=0
\end{equation}
with boundary conditions
 \begin{equation}
y(x)=\sqrt{\frac{2k_a}{x}}\biggl[\cos \zeta(a,
x)+O\biggl(\frac1x\biggr)\biggr],\qquad  \qquad x\to \infty,
\end{equation}
and
 \begin{equation}
y(-x)=\sqrt{\frac{2}{k_ax}}\biggl[\sin \zeta(a,
x)+O\biggl(\frac1x\biggr)\biggr],\qquad  \qquad x\to \infty,
\end{equation}
where $k_a=\sqrt{1+e^{2\pi a}}-e^{\pi a}$ and
 \begin{equation}
\zeta(a, x)=\frac14 x^2-a \ln x +\frac12 \arg
\Gamma\biggl(\frac12+ia\biggr)+\frac\pi4;
\end{equation}
see \cite[p.693]{AS}. The derivative of this function has the
behavior
\begin{equation}
W'(a, x)=-\sqrt{\frac{k_ax}{2}}\biggl[\sin \zeta(a,
x)+O\biggl(\frac1x\biggr)\biggr],\qquad  \qquad x\to \infty,
\end{equation}
and
\begin{equation}
W'(a, -x)=-\sqrt{\frac{x}{2k_a}}\biggl[\cos \zeta(a,
x)+O\biggl(\frac1x\biggr)\biggr],\qquad  \qquad x\to \infty.
\end{equation}
This function is also related to the parabolic cylinder function
$U(a, x)$ via the connection formulas \cite[p.693]{AS}
\begin{equation}
W(a, x)=(2k_a)^\frac12e^{\frac14\pi
a}\,\textup{Re}\;\bigl\{e^{i(\frac12\phi_2+\frac18\pi)}U(ia,
xe^{-\frac14\pi i})\bigr\},
\end{equation}
\begin{equation}
W(a, -x)=(2/k_a)^\frac12e^{\frac14\pi
a}\,\textup{Im}\;\bigl\{e^{i(\frac12\phi_2+\frac18\pi)}U(ia,
xe^{-\frac14\pi i})\bigr\},
\end{equation}
where $x>0$ and $\phi_2=\arg \Gamma(\frac12+ia)$. From the
integral representation \cite[p.208]{Olver}
\begin{equation}
U(a, z)=\frac{e^{-\frac14z^2}}{\Gamma(\frac12+a)}\int^\infty_0
e^{-zs-\frac12s^2} s^{a-\frac12}\,ds, \qquad \qquad
\textup{Re}\; a>-\frac12,
\end{equation}
one can easily show that
\begin{equation*}
|U(ia, xe^{-\frac14\pi i})|\leq
\frac{2^{\frac14}\sqrt{\pi}}{|\Gamma(\frac12+ia)|}x^{-\frac12}.
\end{equation*}
Since
\begin{equation*}
\left|\Gamma({\textstyle\frac12}+ia)\right|=\frac{\sqrt{\pi}}{(\cosh
\pi a)^{1/2}}\sim \sqrt{2\pi}e^{-\frac12\pi a}
\end{equation*}
and
\begin{equation*}
\sqrt{2k_a}\sim e^{-\frac12\pi a}, \qquad  \qquad
\sqrt{\frac{2}{k_a}}\sim 2e^{\frac12\pi a}
\end{equation*}
as $a\to+\infty$, it follows that for large positive $a$, there is
a constant $M_1$ such that
\begin{equation*}
\begin{split}
&|W(a, x)|\leq M_1e^{\frac14\pi a}x^{-\frac12}, \qquad  \qquad x>0,\\
&|W(a, -x)|\leq M_1e^{\frac54\pi a}x^{-\frac12}, \qquad  \qquad
x>0.
\end{split}
\end{equation*}
As $a\to -\infty$, we have
\begin{equation*}
\left|\Gamma({\textstyle\frac12}+ia)\right|\sim \sqrt{2\pi}
e^{\frac12\pi a}= \sqrt{2\pi}e^{-\frac12\pi |a|}
\end{equation*}
and
\begin{equation*}
 \sqrt{2k_a}\sim   \sqrt{2},  \qquad  \qquad  \sqrt{\frac{2}{k_a}}\sim \sqrt{2}.
\end{equation*}
Hence, there exists a constant $M'_1>0$ such that for large
negative $a$,
\begin{equation*}
\begin{split}
&|W(a, x)|\leq M'_1e^{\frac14\pi |a|}x^{-\frac12}, \qquad  \qquad x>0,\\
&|W(a, -x)|\leq M'_1e^{\frac14\pi |a|}x^{-\frac12}, \qquad  \qquad
x>0.
\end{split}
\end{equation*}
By using (5.20), (5.21) and (5.22), it can also be shown that
there are positive constants $M_2$ and $M'_2$ such that for large
positive $a$,
\begin{equation*}
\begin{split}
&|W'(a, x)|\leq M_2e^{\frac14\pi a}x^{\frac12}, \qquad  \qquad x>0,\\
&|W'(a, -x)|\leq M_2e^{\frac54\pi a}x^{\frac12}, \qquad  \qquad
x>0,
\end{split}
\end{equation*}
and for large negative $a$,
\begin{equation*}
\begin{split}
&|W'(a, x)|\leq M'_2e^{\frac14\pi |a|}x^{\frac12}, \qquad  \qquad x>0,\\
&|W'(a, -x)|\leq M'_2e^{\frac14\pi |a|}x^{\frac12}, \qquad  \qquad
x>0.
\end{split}
\end{equation*}

We define
\begin{equation}
\delta_R(a, b)=\int^R_{-R}W(a, x) W(b, x)dx.
\end{equation}
From (5.14), one can derive
\begin{equation*}
\delta_R(a, b)=\frac{W(a, x) W'(b, x)-W'(a, x) W(b,
x)}{b-a}\biggl|^R_{-R}.
\end{equation*}
Using the asymptotic formulas (5.15) and (5.18), we obtain
\begin{equation*}
(b-a)\delta_R(a,
b)=\biggl(\sqrt{k_ak_b}+\frac{1}{\sqrt{k_ak_b}}\biggl)\sin\{\zeta(a,
R)-\zeta(b, R)\}+O\biggl(\frac1R\biggr).
\end{equation*}
By an argument similar to that for Theorem 3, one can establish
the following result.
  \begin{theorem}
For any $a\in\mathbb{R}$ and any piecewise continuously
differentiable function $\phi(x)$ on $(-\infty, +\infty)$, we have
 \begin{equation*}
\lim_{R\to \infty}\int^{+\infty}_{-\infty}\phi (b)\int^R_{-R}W(a,
x)W(b,  x)\,dx \,db=\pi\sqrt{1+e^{2\pi a}}[\phi(a^-)+\phi(a^+)],
\end{equation*}
provided that two integrals
 \begin{equation*}
\int^{-1}_{-\infty}|\phi(x)|e^{\frac14 \pi |x|}\frac{dx}{|x|}
\qquad and\qquad \int^{\infty}_{1}|\phi(x)|e^{\frac54 \pi x}
\frac{dx}{x}
\end{equation*}
are convergent.
  \end{theorem}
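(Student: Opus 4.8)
The plan is to follow the template of Theorem 3, with $\ln R$ now playing the role that $\sqrt{R}$ played there. Writing $\delta_R(a,b)=\int_{-R}^{R}W(a,x)W(b,x)\,dx$ as in (5.23) and using the Wronskian identity together with the asymptotic formulas (5.15)--(5.22), one has on bounded $b$-intervals the formula $(b-a)\delta_R(a,b)=(\sqrt{k_ak_b}+1/\sqrt{k_ak_b})\sin\{\zeta(a,R)-\zeta(b,R)\}+O(1/R)$ recorded just before the statement. The crucial observation is that the $\frac14R^2$ terms in $\zeta(a,R)$ and $\zeta(b,R)$ cancel, so that $\zeta(a,R)-\zeta(b,R)=(b-a)\ln R+\frac12[\arg\Gamma(\frac12+ia)-\arg\Gamma(\frac12+ib)]$; the only term growing with $R$ is $(b-a)\ln R$. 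Thus the situation is exactly parallel to the Airy case, the Dirichlet-kernel parameter being $\ln R$ rather than $\sqrt{R}$. I would split $\int_{-\infty}^{\infty}\phi(b)\delta_R(a,b)\,db$ into tails $|b|>c$ and a central piece $|b|<c$.

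First I would dispose of the tails. Here I use the bounds on $W(b,\pm R)$ and $W'(b,\pm R)$ recorded before (5.23), noting the sign- and size-dependence of the exponential weights: for large positive $b$ the endpoint $x=-R$ contributes the dominant factor $e^{5\pi b/4}$, whereas for large negative $b$ both endpoints contribute only $e^{\pi|b|/4}$. Inserting these into the Wronskian form of $\delta_R(a,b)$ and using $|b-a|\sim|b|$ yields, for $R>2|a|+1$, estimates of the form $\int_{c}^{\infty}|\delta_R(a,b)\phi(b)|\,db\le M\int_{c}^{\infty}e^{5\pi b/4}|\phi(b)|\,db/b$ and $\int_{-\infty}^{-c}|\delta_R(a,b)\phi(b)|\,db\le M'\int_{-\infty}^{-c}e^{\pi|b|/4}|\phi(b)|\,db/|b|$. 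These are exactly the two convergence hypotheses, so the tails can be made smaller than any $\varepsilon>0$ uniformly in $R$.

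On the central interval $(-c,c)$ I would proceed as in (5.12)--(5.13). Away from $b=a$ the factor multiplying $\sin\{\zeta(a,R)-\zeta(b,R)\}$ is integrable in $b$, so by the generalized Riemann-Lebesgue lemma, with the inputs $\int_{A}^{B}\sin\zeta(b,R)\,db\to0$ and $\int_{A}^{B}\cos\zeta(b,R)\,db\to0$ (which follow by integration by parts since $|\partial_b\zeta|\sim\ln R\to\infty$), only a neighborhood $(a-\eta,a+\eta)$ survives. On that neighborhood the same lemma lets me replace the slowly varying prefactor $\sqrt{k_ak_b}+1/\sqrt{k_ak_b}$ by its value at $b=a$ and the phase by $(b-a)\ln R$; here I use the identity $k_a+1/k_a=2\sqrt{1+e^{2\pi a}}$, which follows from $1/k_a=\sqrt{1+e^{2\pi a}}+e^{\pi a}$, and the fact that $O(1/R)/(b-a)$ is bounded and vanishes. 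This reduces the limit to $2\sqrt{1+e^{2\pi a}}\lim_{R\to\infty}\int_{a-\eta}^{a+\eta}\frac{\sin\{(b-a)\ln R\}}{b-a}\phi(b)\,db$. Jordan's theorem on the Dirichlet kernel \cite[p.473]{Apostol}, applied with parameter $\ln R\to\infty$, evaluates the inner limit to $\frac\pi2[\phi(a^-)+\phi(a^+)]$, which reproduces the stated constant $\pi\sqrt{1+e^{2\pi a}}$.

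The central-region analysis is essentially identical to that of Theorem 3, so the one genuinely new ingredient — and the step demanding the most care — is the tail estimate. Tracking which exponential weight ($e^{\pi b/4}$, $e^{5\pi b/4}$, or $e^{\pi|b|/4}$) attaches to each of $W(b,R)$, $W(b,-R)$, $W'(b,R)$, $W'(b,-R)$ as $b\to+\infty$ versus $b\to-\infty$ is what produces the asymmetric weights $e^{5\pi x/4}/x$ and $e^{\pi|x|/4}/|x|$ in the two hypotheses, and getting this bookkeeping right is the crux of the proof.
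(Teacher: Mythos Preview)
Your proposal is correct and follows precisely the template the paper itself invokes: the paper only states that ``by an argument similar to that for Theorem~3, one can establish the following result,'' and you have correctly filled in the details, including the tail estimates from the exponential bounds, the cancellation of the $\tfrac14R^2$ terms in the phase, the identity $k_a+1/k_a=2\sqrt{1+e^{2\pi a}}$, and the Dirichlet-kernel limit with parameter $\ln R$.
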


\section{Series representations}
To prove the representations in $(1.8)-(1.11)$, we only need to
recall  some expansion theorems concerning orthogonal polynomials
(functions). For instance, in the case of Legendre polynomials
$P_n(x)$, Theorem 1 in \cite[p.55]{Lebedev} can be stated in the
following form.

\begin{theorem}  Let $f(x)$ be a piecewise continuously
differentiable function in $(-1, 1)$, and put
\begin{equation}
\delta_n(t,x)=\sum^n_{k=0}\biggl(k+\frac12\biggr)P_k(t)P_k(x).
\end{equation}
If the integral
\begin{equation*}
\int^{1}_{-1} f^2(x)\,dx
\end{equation*}
is finite, then
\end{theorem}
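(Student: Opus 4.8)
The plan is to treat $\delta_n(t,x)$ exactly as the truncated reproducing kernels of Sections 3--5 were treated, and to run the same localization argument. First I would replace the sum in (6.1) by its closed form via the Christoffel--Darboux identity for Legendre polynomials,
\begin{equation*}
\delta_n(t,x)=\frac{n+1}{2}\,\frac{P_{n+1}(t)P_n(x)-P_n(t)P_{n+1}(x)}{t-x},
\end{equation*}
which plays the role that (3.6), (4.7) and (5.7) played earlier: it exhibits the kernel as a single quotient governed by the large-degree asymptotics of $P_n$ rather than by the full sum.

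Next I would pass to the trigonometric variables $t=\cos\alpha$ and $x=\cos\theta$ with $\alpha,\theta\in(0,\pi)$, noting that $dx=-\sin\theta\,d\theta$, and insert the classical Darboux (Laplace--Heine) asymptotic formula \cite{Lebedev}
\begin{equation*}
P_n(\cos\theta)=\sqrt{\frac{2}{\pi n\sin\theta}}\,\cos\!\left(\bigl(n+\tfrac12\bigr)\theta-\tfrac{\pi}{4}\right)+O\!\bigl(n^{-3/2}\bigr),
\end{equation*}
valid uniformly on compact subintervals of $(0,\pi)$, together with the corresponding formula for $P_{n+1}$. Substituting these into the Christoffel--Darboux quotient and applying a product-to-sum identity, the leading contribution of $\delta_n$ reduces, near $\theta=\alpha$, to a Dirichlet kernel in the variable $\theta-\alpha$, up to a rapidly oscillating remainder of frequency $\sim n(\alpha+\theta)$ and error terms of order $1/n$; the remainder and errors, divided by $t-x$, stay continuous and uniformly bounded on a fixed neighbourhood of $t$.

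I would then split $\int_{-1}^{1}\delta_n(t,x)f(x)\,dx$ into an integral over a short neighbourhood $(t-\eta,t+\eta)$ of $t$ and the remaining part. On the remaining interior part the factor $1/(t-x)$ stays bounded, the two oscillatory cosines never resonate, and the generalized Riemann--Lebesgue lemma of Section 2 forces that contribution to $0$; likewise the oscillating remainder is killed. On the neighbourhood the kernel is, to leading order, the Dirichlet kernel, so Jordan's theorem on the Dirichlet kernel \cite[p.473]{Apostol} yields the value $\tfrac12[f(t^+)+f(t^-)]$, precisely as in (3.22) and (4.45)--(4.48).

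The main obstacle will be the two endpoints $x=\pm1$, where $\sin\theta\to0$, the Darboux formula degenerates, and the kernel is \emph{not} uniformly bounded, so the generalized Riemann--Lebesgue lemma does not apply directly. Here the asymptotics give only $|P_n(\cos\theta)|\le M(n\sin\theta)^{-1/2}$, whence, for $x$ in a fixed interval abutting an endpoint and bounded away from $t$, the Christoffel--Darboux quotient satisfies $|\delta_n(t,x)|\le M'(1-x^2)^{-1/4}$ uniformly in $n$. I would then control the endpoint contribution by Cauchy--Schwarz, using the finiteness of $\int_{-1}^{1}f^2\,dx$: since the weight $(1-x^2)^{-1/2}$ is integrable on $(-1,1)$, the bound $\int (1-x^2)^{-1/4}|f(x)|\,dx\le\|f\|_{L^2}\bigl(\int(1-x^2)^{-1/2}dx\bigr)^{1/2}$ over a short abutting interval can be made arbitrarily small. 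This is exactly where the $L^2$ hypothesis is consumed. Once the endpoint pieces are negligible, the localization argument runs on the interior as above, and the asserted limit $\tfrac12[f(t^+)+f(t^-)]$ follows.
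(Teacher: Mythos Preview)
Your argument is sound, but you should know that the paper does not actually prove this theorem: it simply restates Theorem~1 of Lebedev \cite[p.~55]{Lebedev} and observes that the series representation (1.8) is nothing more than a reformulation of the classical Legendre expansion theorem. What you have written is essentially a self-contained proof of that classical theorem, deliberately cast in the template of Sections~3--5 (Christoffel--Darboux in place of the Wronskian identities (3.6)/(4.7)/(5.7), Darboux--Laplace asymptotics in place of (3.8)--(3.9) etc., localization plus the generalized Riemann--Lebesgue lemma, and Jordan's Dirichlet-kernel theorem at the end). The endpoint treatment via the uniform bound $|P_n(\cos\theta)|\le C(n\sin\theta)^{-1/2}$ together with Cauchy--Schwarz against $(1-x^2)^{-1/2}\in L^1(-1,1)$ is exactly the right way to consume the $L^2$ hypothesis, and it mirrors how the tail estimates (3.12), (3.17), (4.10), (4.11), (5.10) work in the earlier sections. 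So your route buys a unified presentation that makes the discrete-spectrum case (1.8) parallel to the continuous-spectrum cases (1.5)--(1.7); the paper's route buys brevity by appealing to a standard reference. Both are legitimate; yours is more work but more informative.
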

\begin{equation}
\lim_{n\to\infty}\int^{1}_{-1}\delta_n(t,
x)f(t)\,dt=\frac12[f(x^+)+f(x^-)].
\end{equation}

The statement in (6.2) is equivalent to that in (1.8); i.e., the
finite sum in (6.1) defines a delta sequence. In a similar manner,
one can restate Theorems 2 and 3 in \cite[p.71 and p.88]{Lebedev}
as follows.

\begin{theorem}  Let $f(x)$ be a piecewise continuously
differentiable function in $(-\infty, \infty)$, and put
\begin{equation}
\delta_n(t, x)=e^{-t^2}\sum^n_{k=0}\frac{1}{2^k
k!\sqrt{\pi}}H_k(t)H_k(x).
\end{equation}
If the integral
\begin{equation*}
\int^\infty_{-\infty}e^{-x^2}f^2(x)\,dx
\end{equation*}
is finite, then
\begin{equation}
\lim_{n\to\infty}\int^\infty_{-\infty}\delta_n(t, x)f(t)\,
dt=\frac12[f(x^+)+f(x^-)].
\end{equation}
\end{theorem}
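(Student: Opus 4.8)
The plan is to recognize the inner integral as a partial sum of the Fourier--Hermite series of $f$ and then reduce the assertion to the classical pointwise-convergence theorem for such series. Using the orthogonality relation $\int_{-\infty}^{\infty} e^{-t^2} H_j(t) H_k(t)\,dt = 2^k k!\sqrt{\pi}\,\delta_{jk}$, the integral $\int_{-\infty}^{\infty}\delta_n(t,x)f(t)\,dt$ collapses to $\sum_{k=0}^{n} c_k H_k(x)$, where $c_k = (2^k k!\sqrt{\pi})^{-1}\int_{-\infty}^{\infty} e^{-t^2} H_k(t) f(t)\,dt$ is the $k$-th Hermite--Fourier coefficient of $f$. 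The hypothesis that $\int_{-\infty}^{\infty} e^{-x^2} f^2(x)\,dx$ is finite is precisely what guarantees these coefficients exist and obey Parseval's identity, so the displayed integral is exactly the $n$-th partial sum $S_n f(x)$ of the Hermite expansion of $f$.

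First I would appeal directly to Theorem 2 of Lebedev (p.71), which states that for piecewise continuously differentiable $f$ with $\int_{-\infty}^{\infty} e^{-x^2} f^2 < \infty$ the partial sums $S_n f(x)$ converge to $\tfrac12[f(x^+)+f(x^-)]$. Since the inner integral equals $S_n f(x)$, equation (6.4) follows immediately, and the analogous Laguerre statement follows from Theorem 3 of Lebedev (p.88) in the same way. This is the route suggested by the sentence preceding the theorem, and it is the shortest.

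For a self-contained argument in the spirit of Sections 3--5, I would instead analyze the kernel directly. By the Christoffel--Darboux formula,
\[
\sum_{k=0}^{n}\frac{H_k(t)H_k(x)}{2^k k!}
 = \frac{1}{2^{n+1} n!}\cdot\frac{H_{n+1}(x)H_n(t)-H_n(x)H_{n+1}(t)}{x-t},
\]
so that $\delta_n(t,x)$ acquires the same difference-quotient structure exploited in (3.6), (4.7), and (5.7). Substituting the Plancherel--Rotach asymptotics for the Hermite functions $e^{-t^2/2}H_n(t)$ in the oscillatory region $|t| < \sqrt{2n+1}$ converts the numerator, after a product-to-sum identity and absorption of the Gaussian factor $e^{-t^2}$, into a term of the form $\sin\{\sqrt{2n}\,(t-x)\}/(t-x)$ plus lower-order oscillatory pieces. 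The plan is then to split the $t$-integral into a neighborhood of $t=x$ and its complement: on the complement the generalized Riemann--Lebesgue lemma of Section 2 kills the oscillatory contributions, while near $t=x$ the leading term is a Dirichlet kernel and Jordan's theorem \cite[p.473]{Apostol} yields $\tfrac12[f(x^+)+f(x^-)]$, exactly as in the proofs of Theorems 1--3.

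The hard part will be the uniformity of the Hermite asymptotics: the Plancherel--Rotach expansion degenerates near the turning points $t \approx \pm\sqrt{2n+1}$, where Airy behavior takes over, so the bulk cosine asymptotics cannot be used all the way out. Since $x$ is fixed, however, it remains in the oscillatory bulk for all large $n$, and the weight $e^{-t^2}$ together with the finiteness of $\int e^{-x^2} f^2$ controls the tail and turning-point regions; the delicate estimate is therefore to show that the portion of the integral between any fixed cutoff and the turning point contributes negligibly, which is where I expect the bulk of the work to lie. For this reason the reduction to Lebedev's theorem is clearly preferable, and it is the argument I would ultimately present.
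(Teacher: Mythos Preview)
Your primary approach---recognizing $\int_{-\infty}^{\infty}\delta_n(t,x)f(t)\,dt$ as the $n$-th partial sum of the Hermite--Fourier expansion of $f$ and then invoking Theorem~2 of Lebedev (p.~71)---is exactly what the paper does: it simply restates Lebedev's expansion theorem in delta-sequence form without further argument. Your alternative Christoffel--Darboux/Plancherel--Rotach sketch goes well beyond what the paper provides, but since you correctly identify the Lebedev reduction as the intended and preferable route, your proposal matches the paper's own treatment.
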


\begin{theorem} Let $f(x)$ be a piecewise continuously
differentiable function in $(0, \infty)$, and put
\begin{equation}
\delta_n(t,
x)=e^{-t}t^\alpha\sum^n_{k=0}\frac{k!}{\Gamma(k+\alpha+1)}L^{(\alpha)}_k(t)L^{(\alpha)}_k(x).
\end{equation}
If the integral
\begin{equation*}
\int^\infty_{0}e^{-t}t^\alpha f^2(t)\,dt, \qquad\qquad \alpha>-1,
\end{equation*}
is finite, then
\begin{equation}
\lim_{n\to\infty}\int^\infty_{0}\delta_n(t, x)f(t)\,
dt=\frac12[f(x^+)+f(x^-)].
\end{equation}
\end{theorem}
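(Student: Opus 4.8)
The plan is to identify the kernel $\delta_n(t,x)$ in (6.5) as the reproducing kernel of the first $n+1$ orthonormal Laguerre polynomials, so that the integral appearing in (6.6) is exactly the $n$-th partial sum of the Laguerre--Fourier expansion of $f$ evaluated at $x$. Once this is seen, the theorem becomes a direct restatement of the classical pointwise convergence theorem for Laguerre series, namely Theorem~3 in \cite[p.88]{Lebedev}.

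First I would record the orthogonality relation
\[
\int_0^\infty e^{-t}t^\alpha L^{(\alpha)}_k(t)L^{(\alpha)}_m(t)\,dt=\frac{\Gamma(k+\alpha+1)}{k!}\,\delta_{km},\qquad \alpha>-1,
\]
so that the normalized functions $\widehat{L}^{(\alpha)}_k(t)=\sqrt{k!/\Gamma(k+\alpha+1)}\,L^{(\alpha)}_k(t)$ form an orthonormal system in the weighted space $L^2\bigl((0,\infty),e^{-t}t^\alpha\,dt\bigr)$. The hypothesis that $\int_0^\infty e^{-t}t^\alpha f^2(t)\,dt$ is finite places $f$ in this space, so its Laguerre--Fourier coefficients $c_k=\int_0^\infty e^{-t}t^\alpha f(t)\,\widehat{L}^{(\alpha)}_k(t)\,dt$ are well defined.

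Next I would substitute this integral formula for $c_k$ into the finite partial sum $S_nf(x)=\sum_{k=0}^n c_k\widehat{L}^{(\alpha)}_k(x)$ and interchange the sum with the integral, which is legitimate because the sum is finite. Using $\widehat{L}^{(\alpha)}_k(t)\widehat{L}^{(\alpha)}_k(x)=\frac{k!}{\Gamma(k+\alpha+1)}L^{(\alpha)}_k(t)L^{(\alpha)}_k(x)$, this yields $S_nf(x)=\int_0^\infty\delta_n(t,x)f(t)\,dt$. Hence the left-hand side of (6.6) is precisely $S_nf(x)$, and the assertion is equivalent to the statement that the Laguerre--Fourier partial sums of $f$ converge at every point to the average $\tfrac12[f(x^+)+f(x^-)]$ of its one-sided limits. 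Invoking the cited expansion theorem then finishes the argument.

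The reproducing-kernel identity is only an algebraic rearrangement of the orthogonality relation and is routine. The genuine content, and the main obstacle, lies in the pointwise convergence theorem that is being quoted: its proof depends on the Hilb-type (Liouville--Green) asymptotics of $L^{(\alpha)}_k$, which yield an equiconvergence theorem comparing the Laguerre partial sums with the partial sums of an ordinary Fourier series, so that the value $\tfrac12[f(x^+)+f(x^-)]$ is ultimately produced by Jordan's theorem on the Dirichlet kernel --- exactly as in the proofs of Theorems~1 and~3. This asymptotic step is what \cite[p.88]{Lebedev} provides, and relying on it is precisely what makes the present proof short.
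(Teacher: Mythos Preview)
Your proposal is correct and matches the paper's approach exactly: the paper does not give an independent proof but simply declares that this theorem is a restatement of Theorem~3 in \cite[p.88]{Lebedev}, which is precisely what you do after the routine identification of $\int_0^\infty\delta_n(t,x)f(t)\,dt$ with the $n$-th partial sum of the Laguerre--Fourier series. If anything, you have supplied more detail than the paper itself, which writes only a single sentence before stating Theorems~6 and~7.
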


To demonstrate (1.11), we recall the Laplace series expansion
\begin{equation}
f(\theta_1,
\phi_1)=\sum^\infty_{k=0}\frac{2k+1}{4\pi}\int^\pi_{-\pi}\int^\pi_0f(\theta_2,
\phi_2)P_k(\cos \gamma)\sin\theta_2  d  \theta_2 d \phi_2,
\end{equation}
where $\cos \gamma=\cos \theta_1\cos \theta_2-\sin \theta_1\sin
\theta_2\cos(\phi_1-\phi_2)$ and $P_k(x)$ is a Legendre
polynomial; see \cite[p.147]{Hobson}. By the addition formula
\cite[p.797]{AW}
\begin{equation}
P_k(\cos \gamma)=\frac{4\pi}{2k+1} \sum^k_{l=-k} Y_{kl}(\theta_1,
\phi_1)Y^*_{kl}(\theta_2, \phi_2),
\end{equation}
we can rewrite (6.7) in the form
\begin{equation}
f(\theta_1,
\phi_1)=\sum^\infty_{k=0}\int^\pi_{-\pi}\int^\pi_0f(\theta_2,
\phi_2)\sin \theta_2  \sum^k_{l=-k}  Y_{kl}(\theta_1,
\phi_1)Y^*_{kl}(\theta_2, \phi_2)d \theta_2 d \phi_2.
\end{equation}
For $f\in C([0, \pi]\times[-\pi, \pi])$, the series on the right
converges pointwise to the function on the left; see
\cite[p.344]{Hobson}. This result can be expressed as follows.

\begin{theorem} Let $f(\theta_1, \phi_1)$ be a continuous
function on $[0, \pi]\times[-\pi, \pi]$, and put
\begin{equation}
\delta_n(\theta_1, \theta_2)\delta_n(\phi_1,
\phi_2):=\sin\theta_2\sum^n_{k=0}\sum^k_{l=-k}Y_{kl}(\theta,
\phi_1)Y^*_{kl}(\theta_2, \phi_2).
\end{equation}
Then, we have
\end{theorem}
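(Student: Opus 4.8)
The statement to be established---its display is cut off in the excerpt but, in parallel with the other series representations of Section 6 (Theorems 5--7), it must read
\[
\lim_{n\to\infty}\int^\pi_{-\pi}\!\int^\pi_0 \delta_n(\theta_1,\theta_2)\delta_n(\phi_1,\phi_2)\,f(\theta_2,\phi_2)\,d\theta_2\,d\phi_2 = f(\theta_1,\phi_1).
\]
The plan is to observe that, unlike the continuous-spectrum integral representations of Sections 3--5 (Theorems 1--4), essentially all of the analytic content has already been packaged into the Laplace series expansion (6.7)--(6.9); the proof is therefore a short bookkeeping argument that terminates in the cited convergence theorem.

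First I would substitute the definition (6.10) into the left-hand side above. Since for each fixed $n$ the double sum over $k$ and $l$ is finite, I may freely interchange summation with integration, and the resulting expression is precisely the $n$-th partial sum of the series on the right of (6.9). Next I would invoke the addition formula (6.8) to collapse each inner sum $\sum_{l=-k}^{k} Y_{kl}(\theta_1,\phi_1)Y^*_{kl}(\theta_2,\phi_2)$ into $\tfrac{2k+1}{4\pi}P_k(\cos\gamma)$, where $\cos\gamma=\cos\theta_1\cos\theta_2-\sin\theta_1\sin\theta_2\cos(\phi_1-\phi_2)$. This identifies the partial sum with the $n$-th partial sum of the Laplace series (6.7) attached to $f$.

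It then remains only to pass to the limit $n\to\infty$. By the pointwise convergence of the Laplace series for a continuous $f$ on $[0,\pi]\times[-\pi,\pi]$, cited from \cite[p.344]{Hobson}, these partial sums converge to $f(\theta_1,\phi_1)$, which is exactly the desired limit and completes the argument.

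The main obstacle is conceptual rather than computational, and it sits entirely inside the result I am treating as a black box: the pointwise convergence of a spherical-harmonic (Laplace) expansion does \emph{not} follow from continuity alone, so a proof from first principles would require Dirichlet--Mehler type estimates on the Legendre kernel $\sum_k(k+\tfrac12)P_k(\cos\gamma)$ near the diagonal $\gamma=0$. Since the paper defers this point to Hobson, everything that remains---the algebraic rearrangement through (6.8)--(6.10) and the interchange of a finite sum with an integral---is purely formal and raises no difficulty.
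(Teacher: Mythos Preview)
Your proposal is correct and follows essentially the same route as the paper: the paper also reduces the claim to the Laplace series (6.7) via the addition formula (6.8), and then cites Hobson \cite[p.344]{Hobson} for pointwise convergence of (6.9) for continuous $f$. Your additional remark that continuity alone is a delicate hypothesis for pointwise convergence is well taken, but the paper likewise black-boxes this point to Hobson, so no discrepancy arises.
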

\begin{equation}
f(\theta_1,
\phi_1)=\lim_{n\to\infty}\int^\pi_{-\pi}\int^\pi_{0}\delta_n(\theta_1,
\theta_2)\delta_n(\phi_1, \phi_2)f(\theta_2, \phi_2)\,d\theta_2 \,
d\phi_2.
\end{equation}

Equation (6.11) is equivalent to saying that $\delta_n(\theta_1,
\theta_2)\delta_n(\phi_1, \phi_2)$ is a delta sequence of
$\delta(\theta_1-\theta_2)\delta(\phi_1-\phi_2)$ for $\theta_1,
\theta_2\in [0, \pi]$ and $\phi_1, \phi_2\in [-\pi, \pi]$. The use
of the identity $\delta(\cos\theta_1, \cos\theta_2)=\frac {1}{\sin
\theta_2}\delta(\theta_1-\theta_2)$ gives (1.11); see
\cite[p.49]{Kanwal}.

\section*{Acknowledgements} The authors would like to thank Professor W. Y. Qiu of Fudan University for many  helpful discussions. His unfailing assistance is greatly appreciated.


\begin{thebibliography}{99}

\bibitem{AS}(0167642) M. Abramowitz and I. A. Stegun (eds.), ``Handbook of
Mathematical Functions," Appl. Math. Ser. No. 55, National Bureau
of Standards, Washington, D.C.,  1964. (Reprinted by Dover, New
York, 1965).

\bibitem{Apostol}(0087718) T. M. Apostol, ``{Mathematical Analysis}," Addison-Wesley,
Reading, MA, 1957.


\bibitem{AW}(1810939) G. B. Arfken and H. J. Weber, ``{Mathematical Methods for
Physicists} (6th ed.)," Elsevier, Oxford, 2005.


\bibitem{Buchholz}(0240343) H. Buchholz, ``{The Confluent Hypergeometric Function},"
Springer-Verlag, Berlin and New York, 1969.




\bibitem{GS}(0166596) I. M. Gel'fand and G. E. Shilov, ``{Generalized
Functions}," Academic Press, New York and London, 1964.

\bibitem{Hobson}(0064922) E. W. Hobson, ``{The Theory of Spherical and Ellipsoidal
Harmonics} (2nd ed.)," Chelsea Publishing Co., New York, 1955.

\bibitem{Jones}(0217534) D. S. Jones, ``{Generalized Functions}," McGraw-Hill,
London, 1966.

\bibitem{Kanwal}(1604296) R. P. Kanwal, ``{Generalized Functions: Theory and
Techniques} (2nd ed.)," Birkh\"{a}user, Boston, 1998.


\bibitem{Lebedev}(0174795) N. N. Lebedev, ``{Special Functions and Their
Applications}," Prentice-Hall, London, 1965.


\bibitem{Lighthill}(0092119) M. J. Lighthill, ``{Introduction to Fourier Analysis and Generalized
Functions}," Cambridge University Press, Cambridge, 1958.



\bibitem{MF}(0059774) P. M. Morse and H. Feshbach, ``{Methods of Theoretical
Physics}," Vol. 1, McGraw-Hill, New York, 1953.

\bibitem{Olver}(1429619) F. W. J. Olver, ``{Asymptotics and Special Functions}," A.
K. Peters, Wellesley, MA, 1997. (Reprinted, with corrections, of
original Academic Press edition, 1974).

\bibitem{OLCB} F. W. J. Olver, D. W. Lozier,  C. W. Clark and R. F. Boisvert
(eds.), ``{NIST Handbook of Mathematical Functions}," National
Institute of Standards and Technology, Gaithersburg, Maryland, to
appear.

\bibitem{Rudin}(0365062) W. Rudin, ``{Functional Analysis}," McGraw-Hill, New York,
1973.

\bibitem{Schwartz}(0207494) L. Schwartz, ``{Mathematics for the Physical Sciences},"
Addison-Wesley, Reading, MA, 1966.

\bibitem{Seaton}(1911183) M. J. Seaton, {\it Coulomb functions for attractive and
repulsive potentials and for positive and negative energies},
Comput. Phys. Comm.,  {\bf 146} (2002), 225--249.


\bibitem{VS}(2114198) O. Vall\'ee and M. Soares, ``{Airy Functions and
Applications to Physics}," Imperial College Press, London,
distributed by World Scientific, Singapore, 2004.

\end{thebibliography}
\end{document}